\documentclass[12pt]{article}

\newif\ifsaneUnicode


\ifsaneUnicode 

	\usepackage{unicode-math}
	
	\newcommand{\−}{\textsuperscript{−−}}  

\else 

	\usepackage[utf8]{inputenc}
	\newcommand{\pred}{^{--}}
	\usepackage{amssymb}  

\fi 

\usepackage{cancel}
\usepackage{comment}
\usepackage{microtype}
\usepackage{amsmath}
\usepackage{amsthm}
\usepackage{censor}
\StopCensoring

\title{A Closer Look at the Russell Paradox}
\author{\censor{Flash Sheridan \\ \textit{\footnotesize{http://flash-sheridan.name  flash@pobox.com}}} }
  
\date{\footnotesize{\censor{20 December 2005, revised and extended 14 September 2021 and} 27 May 2024} \\ \normalsize{\textit{To appear in Logique et Analyse}}}

\newcommand{\bI}{\begin{it}}
\newcommand{\eI}{\/\end{it}}

\newcommand{\Def}[1]{\textbf{#1}\index{#1}}

\newcommand{\+}{\textsuperscript{++}}
\newcommand{\rep}{\text{-}rep}
\newcommand{\MyIndex}[1]{#1\index{#1}}

\theoremstyle{definition}
	\newtheorem{defn}{Definition}[section]
	\newtheorem{axiom}{Axiom}[section]
	\newtheorem{theorem}{Theorem}[section]
		\newtheorem{lemma}[theorem]{Lemma}
		\newtheorem{corollary}[theorem]{Corollary}
		\newtheorem{remark}[theorem]{Remark}

\begin{document}
\maketitle
\AtEndDocument{\medskip \tiny{© \censor{Flash Sheridan 1980, 2005, 2021–4}}}

\begin{abstract}
	 Two types of approximation to the paradoxical Russell Set are presented, one approximating it from below, one from above.  It is shown that any lower approximation gives rise to a better approximation containing it, and that any upper approximation contains a distinct better approximation.  The Russell Paradox is then seen to be the claim that two of these processes of better approximations stop, and at the same set.  This suggests that the unrestricted Axiom of Comprehension is, not a coherent intuition worthy of rescue from a mysterious paradox, but simply wishful thinking, a confusion of sets as extensional objects with classes defined by a property.
\end{abstract}

\ifsaneUnicode 

\section*{Introduction}
	 I present a fresh look at the Russell Paradox, using only trivial set existence axioms, with approximations to the hypothetical Russell Set of all sets which do not belong to themselves.  This will reveal that an approximation which contains too little leads to a better approximation which still contains too little.  Similarly, an approximation which contains too much will give rise to a regress of successively less bad approximations.  Each such series of approximations turns out to be indefinitely extensible, so an intuition that these  processes terminate will not withstand scrutiny.  I will show, for each of the major types of philosophical intuition about set existence, how that intuition fails to cohere with a supposed intuition implying the existence of the Russell Set.  
	 
	 The Russell Paradox will thus be seen to be a simple mistaken claim about set existence; so the unrestricted Axiom of Comprehension is a mistake to be abandoned, not an important intuition to be rescued, despite the suggestions of, among others, Kurt Gödel and Alonzo Church to the contrary.\footnote{See the quotations in §\ref{The Platonist Conception} “The Platonist Conception” and §\ref{Ill-Founded Iterative Conception} “The Ill-Founded Iterative Conception,” respectively.}  This will suggest that attempts to reconcile Comprehension with set theory Platonist enough to have a \MyIndex{universal set}, such as Quine’s New Foundations \cite{Quine 1937}, are philosophically misguided. 

	 The alternative intuition proposed below is a fully Platonist conception of sets as extensional objects without reliance on the temporal metaphor of construction.  A Platonist may observe that sets are closed under certain operations, though that is different from creating new ones; 
	 but this article requires axioms only for two trivial operations.  Church’s “Set Theory with a Universal Set” \cite{Church 1974a} and Oberschelp’s “Set Theory over Classes” \cite{Oberschelp 1973} are encouraging indications that such a theory might be consistent.

 \section{Shared Intuitions} \label{Shared Intuitions}
Formalizing mathematical intuitions, especially multiple conflicting intuitions on the nature of set existence, is obviously perilous; but two beliefs seem shared across some broad groups of intuitions about sets.  

\medskip
	  	Before presenting those, for historical and motivational context, I note the Unrestricted Axiom Schema of Comprehension, which I do \textit{not} assert, and which leads immediately to the Russell Paradox (\cite{Frege 1893}, “Afterword”).  This Schema is the simplification and restriction to \MyIndex{first-order} set theory of Frege’s more general Axiom V (\cite{Frege 1893}, §20 \& §47).  Conventional Zermelo-Fraenkel set theory’s Axiom Schema of Separation restricts the set whose existence is posited by the axiom to being a subset of an existing set; Quine’s New Foundations \cite{Quine 1937} has a Comprehension Schema with a syntactic restriction on the formula \(ϕ\).

 \begin{axiom}{Unrestricted Axiom Schema of Comprehension} \textit{(not asserted)} 
 
 For an arbitrary predicate \(ϕ\) with \(y\) not free in \(ϕ\), \[∃y∀x.\ x∈y ⇔ ϕ(x)\]
 \end{axiom}
 
 \begin{lemma} The Russell Paradox: The Unrestricted Axiom Schema of Comprehension leads immediately to a contradiction. \end{lemma}
 \begin{proof}
 	Replacing “\(ϕ\)” with “\(x∉x\)”, we obtain \(∃y∀x.\ x∈y ⇔ x∉x\).  (This \(y\), if it actually existed, would be the \MyIndex{Russell Set}: \(\{x \mid x∉x\}\).)  Instantiating \(y\) and substituting \(y\) for \(x\), we obtain \(y∈y ⇔ y∉y\).
 \end{proof}

 \smallskip

\medskip
One shared intuition, I claim, is extensionality, i.e., that two sets with the same members are identical, perhaps excepting the \MyIndex{empty set} or \MyIndex{urelements}.  Individuation by its members is arguably part of the sense of the concept of set, and extensionality is shared by all the intuitions of set which I will consider below.  
 An Axiom of \MyIndex{Extensionality} will not however be needed, only the trivial observation that if something is a member of one set but not another, then the two sets are distinct.  
 Some amount of extensionality will be implied by the uniqueness requirements in the two axioms of \MyIndex{Incomprehensive Set Theory}, below.
 
 The second shared intuition is that if two (informal) collections differ only by a single element, then if one is a set, so is the other.  
 This may seem trivial, but is violated by Skala’s Set Theory \cite{Skala 1974}, which is essential to that theory’s avoidance of the Russell Paradox (implicit in \cite{Kuhnrich 1980}).  I claim that this renders that theory merely a curiosity rather than a serious attempt to capture intuitions about set existence.  
 I do not need the second shared intuition in full generality, but instead take as axioms two special cases, where the single differing element is one of the two sets.

A third point, which is a truism in logical endeavors, but perhaps merely a desideratum for intuition, is that a responsible intuition of a generalization must allow intuiting one of its instances.  How to react to a counterexample to an appealing general intuition is not straightforward, and Frege, Gödel, and Church had different reactions:  see the quotations in §\ref{Restrictive Repair Attempts} “Restrictive Repair Attempts,” §\ref{The Platonist Conception} “The Platonist Conception,” and §\ref{Ill-Founded Iterative Conception} “The Ill-Founded Iterative Conception,” respectively.

 \section{Language and Axioms}
 We work in classical \MyIndex{first order} logic with equality, plus a primitive predicate symbol \textit{“set(…)”}.  Little use will be made of this symbol; it occurs only in the axioms to avoid making assumptions about the existence or non-existence of \MyIndex{urelements}.  
 
 \medskip
 \Def{Incomprehensive Set Theory} will consist of only two axioms,  the Existence of Successor and of Predecessor, which we will use for the results to follow.  
	  	One purpose of these minimal axioms is meta-theoretic (or, rather, pre-axiomatic), as a warning of a shortcoming any plausible axiomatization of intuitions about sets must avoid.  
	  	
 	 Any assumptions about set existence, even that any sets at all exist, will be noted explicitly.
 	 What follows will be trivial unless a Lower Russellian Set or an Upper Russellian Set (defined below) exists, and the main interest is when an empty set and a \MyIndex{universal set} exist.  
 	 The ultimate goal is casting light on set theories with non-trivial existence assumptions, but for generality I keep my theory separate from those additional assumptions.

	 \medskip

	 Informally the first axiom states that, for any object \(x\), its successor \(x ∪ \{x\}\) exists and is a unique set;\footnote{I follow \cite{Church 1974a} p.\ 300 in defining this for arbitrary objects, not just numbers.} it will be abbreviated as “\(x\+\)”\index{++}.  Formally,  

		 \begin{axiom}{Existence of Successor} \[ ∀x ∃!y.\  set(y) ∧ ∀z. \ z∈y ⇔ z∈x ∨ z=x \] \end{axiom} 

 \smallskip
	 Informally, for any object \(x\), \(x ∖ \{x\}\) exists and is a unique set; it will be abbreviated as “\(x\−\)\index{-{}-}”.  Formally,  

		 \begin{axiom}{Existence of Predecessor} \[ ∀x ∃!y.\  set(y) ∧∀z.\ z∈y ⇔ z∈x ∧ z≠x \]  \end{axiom}  

	 Despite the notation, no claim is made about whether \(x = x\+\) or \(x = x\−\), or whether the two operations are inverses of each other.  
	 The uniqueness requirement in each axiom is a notational prerequisite to use the two operator symbols as functions.  The sethood requirements are needed to avoid unwanted commitments to the existence or non-existence of urelements.  Without them, for instance, the Predecessor Axiom would rule out the existence of urelements if a \MyIndex{Quine atom} (a set whose only member is itself) existed.
	 
 \begin{remark} Degeneracy \label{degeneracy} \end{remark} Trivially, something is coextensive with its successor iff it is self-membered; it is coextensive with its predecessor iff it is not self-membered. 

	 Thus the non-degenerate case of either \(x\−\) or \(x\+\) is coextensive with \(x Δ \{x\}\), where “\(Δ\)” denotes symmetric difference.  
	 This makes it easy to modify constructions like the interpretation in Church’s Set Theory with a Universal Set \cite{Church 1974a}, or \cite{Sheridan 2016}, to satisfy both the Axiom of Successor and of Predecessor.

 \section{Definitions}  
 \begin{defn}
	 Call something a \Def{Lower Russellian Set} (or just a \Def{lower}, for short) when all its members are non-self-membered.  In symbols: 
	 \[ \Def{lower}(x) ≝ ∀z∈x.\ z∉z \]
\end{defn} 
\begin{defn}
	 Call something an \Def{Upper Russellian Set} (or just an \Def{upper}, for short) when it contains every non-self-membered set.  In symbols: 
	 \[ \Def{upper}(x) ≝ ∀z.\ z∉z ⇒ z∈x \]
 \end{defn}
 
 \begin{defn}
	 Call something a \Def{Strictly Russellian Set} if it is both a Lower Russellian Set and an Upper Russellian Set, i.e.,
 	 \[ \Def{strictly-russellian}(x) ≝ ∀z.\ z∈x ⇔ z∉z \]
\end{defn}

Informally, \(x\) and \(y\) constitute an \Def{ascending link} iff they are distinct and \(x\) belongs to \(y\), and a \Def{descending link} iff they are distinct and \(y\) belongs to \(x\).  
Two definition schemata for expository convenience: for an arbitrary predicate \(ϕ\), \(x\) and \(y\) constitute a \Def{\(ϕ\) ascending link} iff they constitute an ascending link and both have the property \(ϕ\), and analogously for a \Def{\(ϕ\) descending link}.  Formally:
\begin{defn}
  \[ \Def{ascending link}(x, y) ≝ x∈y ∧ x≠y \]
\end{defn}
\begin{defn}
  \[ \Def{descending link}(x, y) ≝ y∈x ∧ x≠y \]
\end{defn}
\begin{defn}
  \[ \Def{\(\phi\) ascending link}(x, y) ≝ x∈y ∧ x≠y ∧ ϕ(x) ∧ ϕ(y) \]  
\end{defn}
\begin{defn}
  \[ \Def{\(\phi\) descending link}(x, y) ≝ y∈x ∧ x≠y  ∧ ϕ(x) ∧ ϕ(y)\]
\end{defn}

\pagebreak[3]
\section{Trivial Lemmata}
	 \begin{lemma} \label{1} A lower is not a member of itself. \end{lemma}
\begin{proof}\nopagebreak[4]
			 Informally, if it were, it couldn’t be.  

		 Formally, assume \(x\) is a lower, and \(x∈x\).  Since \(∀z∈x.\ z∉z\) by definition, \(x∉x\).  
 \end{proof}

	 \begin{lemma} \label{2} An upper is a member of itself. \end{lemma}  

\begin{proof}
			 Informally, if it weren’t, it would have to be.  

		 Formally, assume \(x\) is an upper, and \(x∉x\).  By definition, \(∀z.\ z∉z ⇒ z∈x\), so, substituting \(x\) for \(z\), \(x∈x\).  
\end{proof}

	 \begin{corollary} \label{3} No set is both an upper and a lower. \end{corollary}  
	 This amounts to a restatement of the Russell Paradox:  the existence of a strictly Russellian Set leads immediately to a contradiction.  A more illuminating restatement is below, Theorem \ref{Russell Paradox Restated}.
	 
	 \begin{corollary} \label{stoppage} A lower is coextensive with its predecessor, and an upper is coextensive with its successor, by Remark \ref{degeneracy}. \end{corollary}  

	 \begin{lemma} \label{pred self} \(x∉x\−\). \end{lemma}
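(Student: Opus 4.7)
The plan is to argue directly from the defining property of the predecessor operation given by the Axiom of Existence of Predecessor. That axiom tells us that $x\pred$ is the unique $y$ such that for all $z$, $z \in y \Leftrightarrow z \in x \land z \neq x$.

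The key step is simply to instantiate the universally quantified variable $z$ in this biconditional with $x$ itself. This yields $x \in x\pred \Leftrightarrow x \in x \land x \neq x$. The right-hand conjunct $x \neq x$ is false by the reflexivity of equality (no set existence assumption is needed, only logic), so the whole right side is false, hence $x \notin x\pred$.

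There is no real obstacle here: the lemma is a one-line consequence of the defining property of $\pred$, and in particular it holds whether or not $x$ is self-membered (if $x \notin x$, then $x\pred$ is coextensive with $x$ by Remark \ref{degeneracy}, and the conclusion reduces to the hypothesis; if $x \in x$, the strict removal of $x$ does the work). So I would write the proof as a single formal deduction from the axiom, noting that no appeal to Extensionality or to any other set existence assumption is required.
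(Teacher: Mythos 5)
Your proof is correct and takes exactly the same route as the paper: instantiate the universally quantified $z$ in the defining biconditional of the predecessor with $x$ itself, observe that the conjunct $x \neq x$ is false, and conclude $x \notin x\pred$. The additional case analysis via Remark~\ref{degeneracy} is harmless but unnecessary commentary; the paper's proof is the single substitution step you identify as the key move.
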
  
\begin{proof}
	By the definition of predecessor, \(∀z.\ z∈x\− ⇔ z∈x ∧ z≠x\), so substituting \(x\) for \(z\), \(x∈x\− ⇔ x∈x ∧ x≠x\).  Thus \(x∉x\−\).
\end{proof}

	 \begin{lemma} \label{4} Similarly, \(x∈x\+\). \end{lemma}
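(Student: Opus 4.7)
The plan is to mirror exactly the proof of Lemma \ref{pred self}, using the Axiom of Existence of Successor in place of Predecessor. The defining property of $x\+$ gives, for every $z$, the biconditional $z \in x\+ \Leftrightarrow z \in x \vee z = x$. First I would instantiate this biconditional at $z = x$, obtaining $x \in x\+ \Leftrightarrow x \in x \vee x = x$.

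Next I would observe that the right-hand side is true: the disjunct $x = x$ holds by reflexivity of equality, so the whole disjunction holds regardless of whether $x \in x$. By modus ponens on the biconditional, $x \in x\+$, which is what is to be shown.

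There is essentially no obstacle here — the argument is a one-line application of the Successor axiom together with reflexivity of equality, and it is strictly dual to the Predecessor case, where the extra conjunct $x \neq x$ falsified the right-hand side and gave $x \notin x\pred$; here the disjunct $x = x$ makes the right-hand side true and yields membership instead. The only thing worth flagging is that this proof, unlike Lemma \ref{pred self}, makes no use of any nontrivial property of $x$: it holds for every object in the domain, and in particular does not depend on whether $x$ is self-membered or on the degeneracy behavior noted in Remark \ref{degeneracy}.
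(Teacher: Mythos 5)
Your proof is correct and is exactly what the paper intends by its one-word ``Similarly'': instantiate the defining biconditional of the Successor axiom at $z = x$ and note that the disjunct $x = x$ makes the right-hand side true, dually to the way $x \neq x$ falsified it in Lemma \ref{pred self}. Your closing observation that the result holds for arbitrary $x$ with no hypothesis is also consistent with how the paper uses the lemma.
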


\pagebreak[3]
 \section{Lemmata} 
	 \begin{lemma} \label{A} If \(x\) is a lower, then \(x\+ ≠ x\). \end{lemma}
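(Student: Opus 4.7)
The plan is to combine Lemma \ref{1} with Lemma \ref{4} and the paper's informal observation that serves in place of extensionality (i.e.\ that having a member not shared by another set suffices to establish distinctness). Since $x$ is a lower, Lemma \ref{1} gives $x \notin x$. On the other hand, Lemma \ref{4} tells us $x \in x^{++}$ for any $x$ whatsoever. So $x^{++}$ has a member, namely $x$ itself, which $x$ lacks, hence $x^{++} \neq x$.

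Alternatively, one can appeal directly to Remark \ref{degeneracy}: something is coextensive with its successor iff it is self-membered. Lemma \ref{1} says a lower fails to be self-membered, so it is not coextensive with its successor, and in particular $x^{++} \neq x$. Either route is one or two lines.

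There is no real obstacle here; the only thing to be slightly careful about is that the paper disclaims a full Extensionality axiom and relies only on the trivial observation that a witnessing member on one side and not the other forces distinctness. The first route above uses exactly that observation, with the witness $x$, so it is the cleanest presentation. No use of the Axiom of Predecessor is needed for this lemma; only Successor (via Lemma \ref{4}) and the defining property of lowers (via Lemma \ref{1}) are invoked.
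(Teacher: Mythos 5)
Your proposal is correct and is essentially the paper's own argument in contrapositive form: the paper assumes \(x\+ = x\), applies Lemma \ref{4} to get \(x∈x\), and contradicts Lemma \ref{1}, whereas you run the same two lemmas directly and conclude distinctness from the witness \(x\). Either phrasing (or your alternative via Remark \ref{degeneracy}, which is how Corollary \ref{stoppage} is handled) is fine.
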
  \nopagebreak[4]

\begin{proof}
			 Assume not.  So \(x\+ = x\), and \(x∈x\+\) by Lemma \ref{4}.  So \(x∈x\), and \(x\) is not a lower.  
\end{proof}

	 \begin{lemma} \label{B} If \(x\) is a lower, then \(x\+\) is a lower. \end{lemma}   

\begin{proof}
			 Let \(z\) be a member of \(x\+\).  So \(z∈x ∨ z=x\).  But \(x\) is a lower, so in the first case, \(z\) is non-self-membered.  In the second case, since \(x\) is a lower, it is non-self-membered by Lemma \ref{1}.  
\end{proof}

	 \begin{lemma} \label{C} If \(x\) is an upper, then \(x\−≠x\). \end{lemma}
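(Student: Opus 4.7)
The plan is to mirror the proof of Lemma~\ref{A} in the predecessor direction, using Lemma~\ref{2} (an upper is a member of itself) in place of Lemma~\ref{1}, and Lemma~\ref{pred self} (\(x∉x\−\)) in place of Lemma~\ref{4}. The argument collapses to a single observation about membership disagreement.

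Concretely, since \(x\) is an upper, Lemma~\ref{2} gives \(x∈x\), while Lemma~\ref{pred self} gives \(x∉x\−\). Thus \(x\) is itself a witness to membership disagreement between \(x\) and \(x\−\), so by the trivial distinctness observation that stands in for extensionality in this paper (if something belongs to one set but not another, the two sets are distinct), we conclude \(x\−≠x\).

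Alternatively, in the by-contradiction style the author uses for Lemma~\ref{A}: assume \(x\−=x\); then \(x∉x\−\) becomes \(x∉x\), so by the contrapositive of Lemma~\ref{2} the set \(x\) is not an upper, contradicting the hypothesis. There is no genuine obstacle here, since the lemma is the exact dual of Lemma~\ref{A} and both required ingredients are already established; the only thing to watch is pairing uppers with the predecessor (via self-membership and the pred-self lemma) rather than pairing lowers with the successor.
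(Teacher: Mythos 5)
Your main argument is exactly the paper's proof: Lemma~\ref{pred self} gives \(x∉x\−\), Lemma~\ref{2} gives \(x∈x\), and the membership disagreement at \(x\) forces \(x\− ≠ x\). The proposal is correct and takes essentially the same approach as the paper.
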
  

\begin{proof}
			 By Lemma \ref{pred self}, \(x∉x\−\).  But \(x∈x\) by Lemma \ref{2}, so \(x\) and \(x\−\) are distinct.  
\end{proof}

	 \begin{lemma} \label{D} If \(x\) is an upper, then \(x\−\) is an upper. \end{lemma}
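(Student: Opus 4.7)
The plan is to unpack both definitions and show that removing $x$ from itself does not cost us any non-self-membered element, because the removed element $x$ is self-membered and hence not something an upper is required to contain.

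More concretely, I would let $z$ be an arbitrary set with $z \notin z$ and aim to show $z \in x^{--}$. By the Axiom of Predecessor, this is equivalent to establishing the conjunction $z \in x \wedge z \neq x$. The first conjunct is immediate from the hypothesis that $x$ is an upper, applied to $z$.

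For the second conjunct, the key observation is Lemma \ref{2}: since $x$ is an upper, $x \in x$. Combined with our assumption $z \notin z$, we cannot have $z = x$, for that would force $z \in z$. Hence $z \neq x$, and both conjuncts hold, giving $z \in x^{--}$.

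I do not expect any real obstacle: the argument is the natural dual of Lemma \ref{B}, with Lemma \ref{2} playing the role that Lemma \ref{1} played there. The only point that deserves a sentence of emphasis is precisely this use of Lemma \ref{2} to justify $z \neq x$, since it is what prevents the deletion of $x$ from shrinking $x$ below the upper threshold. Note also that by Corollary \ref{C}, $x^{--}$ is a strictly better approximation than $x$ (i.e. distinct from it), which is the point of the regress advertised in the introduction.
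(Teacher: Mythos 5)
Your proof is correct and follows essentially the same route as the paper's: fix a non-self-membered $z$, get $z \in x$ from the upper hypothesis, and use Lemma \ref{2} ($x \in x$) to conclude $z \neq x$, hence $z \in x\pred$ by the definition of predecessor. (One trivial nit: the distinctness result you cite at the end is Lemma \ref{C}, not a corollary, and it is proved separately rather than being part of this lemma.)
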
  

\begin{proof}
			 Let \(z\) be non-self-membered; show that it is in \(x\−\).  By hypothesis \(z\) is in \(x\).  By the definition of predecessor, \(z∈x\− ⇔ z∈x ∧ z≠x\); so \(z∈x\− ⇔ z≠x\).  But \(x\) is an upper, hence \(x∈x\) by Lemma \ref{2}.  So \(z\), which is non-self-membered, is distinct from \(x\); thus \(z∈x\−\).  
\end{proof}

	 \begin{lemma} \label{E} If \(x\) is an upper, then \(x\−∈x\). \end{lemma}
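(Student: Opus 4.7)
The plan is to apply the defining property of an upper directly to $x^{--}$, after first observing that $x^{--}$ is itself non-self-membered. The key input is Lemma \ref{pred self}, which already tells us $x^{--} \notin x^{--}$ for any $x$ whatsoever, without any hypothesis about $x$ being an upper.

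So the proof is essentially two lines. First I would invoke Lemma \ref{pred self} to conclude that $x^{--}$ is non-self-membered. Then, since by hypothesis $x$ is an upper, the definition of upper ($\forall z.\ z \notin z \Rightarrow z \in x$) applied with $z \coloneqq x^{--}$ yields immediately that $x^{--} \in x$.

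There is essentially no obstacle here: the lemma is a one-step consequence of the definition of upper and the already-established fact that predecessors are never self-membered. The only thing worth noting for the reader is that we do \emph{not} need to use Lemma \ref{D} (that $x^{--}$ is again an upper) or Lemma \ref{C} (that $x^{--} \neq x$) for this particular conclusion; the statement follows purely from $x^{--}$ being non-self-membered together with $x$ being an upper. Those companion lemmas are what make this result interesting — combined, they say $x^{--}$ is a \emph{strictly smaller} upper sitting inside $x$ — but the membership claim itself is immediate.
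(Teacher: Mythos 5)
There is a genuine gap, and it is located exactly at what you call your ``key input.'' Lemma \ref{pred self} states $x \notin x^{--}$ --- that $x$ is not a member of its \emph{own predecessor} --- not that $x^{--} \notin x^{--}$, i.e.\ not that the predecessor is non-self-membered. Instantiating Lemma \ref{pred self} at $x^{--}$ only yields $x^{--} \notin (x^{--})^{--}$, which is not what you need either. The fact you actually rely on, $x^{--} \notin x^{--}$, is not merely unproven: under the hypothesis of Lemma \ref{E} it is provably \emph{false}, since by Lemma \ref{D} the predecessor of an upper is again an upper, and by Lemma \ref{2} every upper is self-membered, so $x^{--} \in x^{--}$. (The Quine-atom example in the Appendix, $q = \{q\}$ with $q^{--} = \emptyset$, shows the premise can hold for some sets, but the universal set already gives $U^{--} \in U^{--}$, and uppers are exactly the case where it must fail.) So the one-step application of the definition of upper to $z \coloneqq x^{--}$ never gets off the ground.

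The paper's own proof goes the opposite way: it uses Lemma \ref{D} to conclude $x^{--}$ is an upper, hence $x^{--} \in x^{--}$ by Lemma \ref{2}, and then observes that by the definition of predecessor $x^{--}$ is a subset of $x$, so the member $x^{--}$ of $x^{--}$ is also a member of $x$. In other words, Lemma \ref{D} --- which you explicitly set aside as unnecessary --- is doing essential work here; the membership claim is not immediate from the definition of upper alone.
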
  

\begin{proof}
			 By Lemma \ref{D}, \(x\−\) is an upper, so by Lemma \ref{2}, \(x\−∈x\−\).\footnote{Thomas Forster has queried in personal communication whether there are cases, except for pathologies like \(q = \{q\}\), where \(x∈x\) but \(x\−∉x\−\).  I sketch in the Appendix a modification of the construction in \cite{Sheridan 2016} with such a set, satisfying \(N = U ∖ \{N\−\}\), where \(U\) is the \MyIndex{Universal Set}.\label{footnote_Forster}}
		 By the definition of predecessor, \(x\−\) is a subset of \(x\), so \(x\−∈x\).
\end{proof}

 	Thus, informally, starting with any lower (for instance the \MyIndex{empty set}), there is a distinct lower containing it, and so on.
	Starting with any upper (for instance the \MyIndex{Universal Set}), there is a distinct upper contained in it, and so on.\footnote{Note that the minimal axioms of \MyIndex{Incomprehensive Set Theory} do not provide enough machinery to formalize the above “and so on” with sets, so they remain intuitive indefinitely-extensible processes, rather than what might properly be called infinite ascending or descending chains.  In particular, the axioms do not address taking limits, though note that hypothetically, if the union of a collection of lowers exists, or the intersection of a collection of uppers, then it is also a lower or upper respectively.  \cite{Church 1974a} p.\ 304 notes that it is an open question whether there is a plausible theory with a set with an infinite descending chain but without a membership cycle.}
	More formally:

\begin{theorem} \label{Main Result}
Main Result 
\begin{enumerate}
  \item No lower link intersects any upper link.
  \item 	For any lower \(x\), \(x\) and \(x\+\) constitute a lower ascending link.
  \nopagebreak[4]
  \item For any upper \(x\), \(x\) and \(x\−\) constitute an upper descending link.
\end{enumerate}
 \end{theorem}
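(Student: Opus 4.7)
The proof is essentially a matter of assembling the lemmata already established in the previous section; none of the three parts requires new machinery. My plan is to take the three clauses in order, since clauses (2) and (3) are parallel and clause (1) follows immediately from Corollary \ref{3}.

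For clause (1), I would unfold definitions. A lower link (ascending or descending) is a pair $(x,y)$ where, among other things, both $x$ and $y$ satisfy \textbf{lower}; an upper link is a pair where both satisfy \textbf{upper}. If a lower link and an upper link share a set $w$, then $w$ is simultaneously a lower and an upper, contradicting Corollary \ref{3}. So I would simply observe this and appeal to Corollary \ref{3}.

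For clause (2), I would verify the three conjuncts of \textbf{lower ascending link}$(x, x\+)$ individually: (i) $x \in x\+$ is Lemma \ref{4}; (ii) $x \neq x\+$ is Lemma \ref{A}, which uses the hypothesis that $x$ is a lower; (iii) $x$ is a lower by hypothesis, and $x\+$ is a lower by Lemma \ref{B}. Clause (3) is dual: to check \textbf{upper descending link}$(x, x\−)$, I verify (i) $x\− \in x$ by Lemma \ref{E}; (ii) $x \neq x\−$ by Lemma \ref{C}; (iii) $x$ is an upper by hypothesis and $x\−$ is an upper by Lemma \ref{D}.

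There is no serious obstacle here; the only small point where I would pause is making explicit in clause (1) what \emph{intersects} means for a pair of links, since the term has not been formally defined in the excerpt. I would read it as the pairs sharing at least one entry, which makes the argument via Corollary \ref{3} immediate; if instead it were read as the relational-composition sense (the target of one equals the source of the other), the same Corollary still applies, since the shared set would again have to be both an upper and a lower.
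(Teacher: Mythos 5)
Your proposal is correct and matches the paper's own proof essentially exactly: clause (1) is Corollary \ref{3}, clause (2) assembles Lemmata \ref{A}, \ref{B}, and \ref{4}, and clause (3) assembles Lemmata \ref{C}, \ref{D}, and \ref{E}. Your extra care in spelling out what \emph{intersects} means for links is a reasonable addition but does not change the argument.
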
  
	 
\begin{proof}
In three parts.
\begin{enumerate}
   \item Corollary \ref{3} states that nothing is both an upper and a lower. 
   \item By Lemma \ref{A}, a lower’s successor is distinct, by Lemma \ref{B} is also a lower, and by Lemma \ref{4} contains it.
   \item By Lemma \ref{C}, an upper’s predecessor is distinct, by Lemma \ref{D} is also an upper, and by Lemma \ref{E} is contained by it.
\end{enumerate}
\end{proof}

 \section{The Russell Paradox Restated}
	 The Russell Paradox, and hence the unrestricted Axiom of Comprehension, imply the claim that one such ascending process, and one such descending process, both \textit{stop,} and at the same set, which is both an upper and a lower.  Stated formally, arranged to display the contrast with the preceding theorem:
	 
	 \begin{theorem} Russell Paradox Restated \label{Russell Paradox Restated} \\
	 Let \(R\) be a strictly Russellian set.
\begin{enumerate}
  \item \(R\) is both an upper and a lower.
  \item 	\(R\+ = R\) 
  \nopagebreak[4]
  \item \(R\− = R\)
\end{enumerate}
	 
	  \end{theorem}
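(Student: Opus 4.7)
The plan is to peel the biconditional in the definition of a strictly Russellian set into its two halves, and then invoke the already-proved degeneracy results to force the successor and predecessor to equal \(R\).

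For Part 1, I would simply unfold \(\forall z.\ z\in R \Leftrightarrow z\notin z\). The left-to-right direction says every member of \(R\) is non-self-membered, which is literally the definition of \textbf{lower}. The right-to-left direction says every non-self-membered set is in \(R\), which is literally the definition of \textbf{upper}. So no work is really needed beyond reading the definitions.

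For Parts 2 and 3, I would feed Part 1 into Corollary \ref{stoppage}, which says a lower is coextensive with its predecessor and an upper is coextensive with its successor. Since \(R\) is an upper, it is coextensive with \(R\+\); since \(R\) is a lower, it is coextensive with \(R\−\). To upgrade coextensiveness to literal equality without Extensionality, I would use the uniqueness clauses in the Successor and Predecessor axioms: \(R\+\) is \emph{defined} as the unique \(y\) with \(\forall z.\ z\in y \Leftrightarrow z\in R \vee z=R\), and if \(R\) itself has this property (which it does when \(R\in R\), guaranteed by Lemma \ref{2} since \(R\) is an upper), then \(R\+ = R\) by uniqueness. The argument for \(R\− = R\) is symmetric, using Lemma \ref{1} to get \(R\notin R\) so that \(R\) itself witnesses the defining property of \(R\−\).

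There is essentially no obstacle here; the theorem is a repackaging of Corollaries \ref{3} and \ref{stoppage} together with the definitional unpacking of \textbf{strictly-russellian}. The only point that merits care is the coextensive-versus-equal issue in Parts 2 and 3, which is handled by the \(\exists!\) in the two axioms rather than by any appeal to Extensionality. The rhetorical weight of the theorem lies not in its proof but in the juxtaposition with Theorem \ref{Main Result}: the paradox is exactly the claim that the two strictly monotone processes described there both terminate at a common fixed point.
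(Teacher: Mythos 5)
Your proof is correct and follows the paper's own argument exactly: Part 1 by unfolding the definition of \textbf{strictly-russellian} into its two directions, and Parts 2 and 3 by applying Corollary \ref{stoppage} with \(R\) as an upper and as a lower respectively. Your explicit appeal to the \(\exists!\) clauses of the Successor and Predecessor axioms to upgrade coextensiveness to equality is a detail the paper leaves implicit, but it is the same route, not a different one.
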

	 
	\begin{proof}
\begin{enumerate}
  \item By the definition of strictly Russellian.
  \item By \ref{stoppage}, since \(R\) is an upper.
  \item Also by \ref{stoppage}, since \(R\) is a lower.
\end{enumerate}
\end{proof}

   \section{Philosophical Interpretations}\label{Philosophical Interpretations}

	 Leaving the realm of proof, and entering that of intuition, I claim that the above restatement of the \MyIndex{Russell Paradox} shows that the \MyIndex{Axiom of Comprehension} does not count as a worthwhile intuition deserving of rescue or repair.\footnote{See \textit{contra,} for example, the quotations below from Gödel in §\ref{The Platonist Conception} “The Platonist Conception” and Church in §\ref{Ill-Founded Iterative Conception} “The Ill-Founded Iterative Conception.”}  
	 On close examination, the conflation of predicates and sets (discussed below; “Begriffsumfange oder Klassen” in Frege’s terminology, “\MyIndex{extensions of concepts} or \MyIndex{classes}”) is not essentially correct, with minor blurring, as Gödel would like and Frege feels forced to assume.\footnote{See the quotation below in §\ref{Restrictive Repair Attempts} “Restrictive Repair Attempts.”}  A better visual analogy will be an optical illusion, where the initial impression turns out to be without foundation, and must be adjusted in the light of reality.  
	 
	 As discussed in §\ref{Shared Intuitions} “Shared Intuitions”, a responsible intuition of a generalization must allow intuiting one of its instances, in this case the existence of the \MyIndex{Russell Set}.  
	 This intuition depends on one’s basic intuition about the nature of set existence, which seems to come in three relevant varieties, which I will call the iterative conception, the lasso, and the Platonist.\footnote{I do not claim that my analysis is relevant to some very different conceptions: positive and related theories such as those of Malitz and Barwise \& Moss; ill-founded set theories without a universal set, such as Forti \& Honsell’s and Aczel’s; and the type theory of  \MyIndex{Whitehead} \& \MyIndex{Russell}’s \MyIndex{\textit{Principia Mathematica}}.}  For each of these conceptions, a supposed intuition of the existence of the Russell Set does not withstand scrutiny: the supposed Comprehension intuition does not cohere with the conception.  A fourth category, which I discuss first, is restrictive repair attempts without a conspicuous intuition behind them, for instance as in Frege’s hurried addition (below) to his \textit{Grundgesetze} in reaction to Russell’s letter, or Quine’s New Foundations \cite{Quine 1937}.
	 
 \subsection{Restrictive Repair Attempts} \label{Restrictive Repair Attempts}
 To Frege’s credit, his reaction to Russell’s letter describing his paradox was acceptance rather than denial; he rushed into print, however, a hurried restriction (without a philosophical justification) to his Axiom V  (which in modern terminology is a \MyIndex{higher-order} generalization of the unrestricted \MyIndex{Axiom of Comprehension}) \cite{Frege 1893} “Afterword.”  This attempt is now generally called \MyIndex{Frege’s Way Out}.  

\begin{quotation}

Wie sollen wir uns hierzu stellen?  Sollen wir annehmen, das Gesetz vom ausgeschlossenen Dritten gelte von den Klassen nicht?  Oder sollen wir annehmen, es gebe Fälle, wo einem unanfechtbaren Begriffe keine Klasse entspreche, die sein Umfang wäre?……

Es bleibt also wohl nichts anderes übrig, als die \textbf{Begriffsumfange oder Klassen} als Gegenstände 
im eigentlichen und vollen Sinne 
dieses Wortes anzuerkennen, zugleich
aber einzuräumen, dass die bisherige
Auffassung der Worte „Umfang eines 
Begriffes" einer Berichtigung bedarf.

\textit{What attitude should we take to this?  Should we assume the law of excluded middle fails for classes?  Or should we assume that there are cases where to an incontestable concept no class corresponds as its extension?…}

\textit{So presumably nothing remains but to recognise \textbf{extensions of concepts or classes} as objects in the full and proper sense of the word, but to concede at the same time that the erstwhile understanding of the words “extension of a concept” requires correction. } \cite{Frege 1893} “Afterword,” pp.\ II.254–6, Ebert \& Rossberg translation.  Emphasis added.
\end{quotation}

Approximately four years later he apparently realized that his restricted axiom was inadequate for crucial proofs (\cite{Dummett 1991} p.\ 5).  He abandoned extensions of concepts completely and spent much of the rest of his career in apparent despair; he later started an attempt to found mathematics without using classes \cite{Frege 1924}.  In 1955, Quine showed that the restricted axiom proved that there was at most one object \cite{Quine 1955}.\footnote{Compare also Quine’s remarks on p.\ 156 with the definition of \MyIndex{Lower Russellian Set} and Lemma \ref{B}:  “we might require of a satisfactory reinterpretation that \(\hat{x}(ϕx)\) [the extension of the concept \(ϕ\)] continue to contain only objects \(x\) such that \(ϕx\).…  \(\hat{x}(ϕx)\) must not be a subclass of any further class which contains only objects \(x\) such that \(ϕx\)…  It may happen, for some open sentences, that every class whose members all fulfill the sentence is a subclass of a further class whose members all fulfill the sentence, so that there is no final class of the kind.”  See also \cite{Dummett 1991} p.\ 316.}  In 2009 Cook derived a contradiction based on the \MyIndex{Liar Paradox} from it \cite{Cook 2009}.

\smallskip
	 I claim that Frege indulged uncharacteristically here in what I would call the Fallacy of the Only Solution, and a flawed attempted correction (without a philosophical conception), to avoid facing squarely an instance where “to an incontestable concept no class corresponds as its extension.”  While modern set theories coherently treat “classes as objects in the full and proper sense of the word,” we now recognise that “extensions of concepts” and “classes” are not synonyms:  There exist both indefinable sets and extensionless concepts.

\bigskip
In 1937, Quine published his \MyIndex{New Foundations} set theory (\cite{Quine 1937}), which has a syntactically-restricted \MyIndex{Axiom of Comprehension}, also without a philosophical justification.  
In 1940 Quine presented a variant of this, \MyIndex{ML}, which Rosser showed suffered from the \MyIndex{Burali-Forti paradox} (revised edition of \cite{Quine 1951}, p.\ ix).

\bigskip  

From a Formalist perspective, inconsistencies simply remove all interest from Frege’s original system and his Way Out, and from Quine’s ML. 
New Foundations has fared much better for a Formalist; no contradiction has been proved in it, and considerable work has been done towards a relative consistency proof with \MyIndex{Zermelo-Frænkel set theory}.  It has one of Frege’s key desiderata, equinumerosity equivalence classes as sets, though its account of cardinality is counter-intuitive:  The cardinality of the universal set is different from that of the set of all singletons; although both are sets, the intuitively obvious mapping — the \MyIndex{singleton function} — cannot be a set (\cite{Holmes 1998}, p.\ 110, 131).  New Foundations also disproves the Axiom of Choice (\cite{Specker 1953}), though its variant NFU, which allows urelements, does not, and can be shown consistent in \MyIndex{Zermelo Set Theory} (\cite{Forster 1992} p.\ 67).  

\medskip
For Platonists, among others, \MyIndex{New Foundations} is not a philosophically satisfying solution to the Russell Paradox.  As Sir Michael Dummett wrote (\cite{Dummett 1991} p.\ 230), reflecting both his \MyIndex{Intuitionist} perspective and Frege’s \MyIndex{Logicism}:

\begin{quotation}
	Whatever mathematicians profess, mathematical theories conceived in a wholly formalist spirit are rare.  One such is Quine’s \MyIndex{New Foundations} system of set theory, devised with no model in mind, but on the basis of a hunch that a purely formal restriction on the comprehension axiom would block all contradictions.  The result is not a mathematical theory, but a formal system capable of serving as an \textit{object} of mathematical investigation:  without some conception of what we are talking about, we do not have a theory, because we do not have a subject-matter.
\end{quotation}

From a Platonist perspective such as Gödel’s (below), such concerns (along, perhaps, with mathematical elegance) are primary, and consistency (in sharp contradistinction to Formalism) while a necessary condition, is not alone sufficient.  Thus New Foundations would resemble \MyIndex{Frege’s Way Out} and \MyIndex{ML} in their early years, before their undesirable consequences (including, but not limited to, inconsistency) were discovered.

\subsection{The Iterative Conception}
	 The most influential variety of basic intuition is the iterative conception of set (e.g., \cite{Boolos 1971}); according to this intuition, sets are constructed in a manner understood via a temporal metaphor.  There are two subvarieties:  orthodox well-founded theories such as Zermelo-Fraenkel, and an ill-founded theory by Alonzo Church.
	 
	 For both subvarieties, the intuition implies the Axiom of the Existence of Successor quite directly, and requires the Axiom of the Existence of Predecessor slightly less directly:  Whatever metaphorical iteration led to a given set would also lead to its predecessor.  

\subsubsection{Well-Founded Iterative Conception}
	 Orthodox set theories such as Zermelo-Fraenkel\footnote{See \cite{Holmes 2017} for an argument that, since ZF allows sets to be constructed at an earlier level via quantification over sets constructed at a later level, the theory justified by the iterative conception is actually Zermelo Set Theory with \(Σ_{2}\) replacement.} satisfy the Axiom of Foundation and hence have no self-membered sets.  Thus all sets are lowers, and there are no uppers.  
	 
	 \smallskip
	 Comprehension’s implication of the existence of \MyIndex{Russell Set} then would amount to asserting the familiar \MyIndex{Mirimanoff} Paradox \cite{Mirimanoff 1917} of the set of all well-founded sets.  Having the conceived iteration not proceed even a single step beyond this set of all well-founded sets, would flout the iterative nature of the basic intuition.
	 
\subsubsection{Ill-Founded Iterative Conception}\label{Ill-Founded Iterative Conception}
Church’s published Set Theory with a Universal Set (\cite{Church 1974a}, p.\ 303) has both ill-founded sets and a 
 well-founded part which resembles that of the base model of Zermelo-Fraenkel with the \MyIndex{Axiom of Choice} used in his relative consistency proof. 
The axiom schemas of \textit{Aussonderung} (Separation), Replacement, and a sequence generalizing \MyIndex{Frege-Russell cardinals} are restricted to well-founded sets; so is the Axiom of Power Set, though this restriction (as well as some of the axioms and schemas) is removed in the related theory of \cite{Mitchell 1976}.
Church presents no intuition behind his theory here or in his unpublished notes and later related theories (\cite{Sheridan 2016} pp.\ 9–10).
Instead he suggests an historical heuristic:
 \begin{quotation}
  principle of specialized comprehension… we seek axioms which are special cases of the general comprehension axiom and which promise to maintain consistency while at the same time being adequate for a large variety of mathematical purposes.… may be roughly described by saying that we accept all of naive set theory except the comprehension axiom, and then assume as many special cases of the comprehension axiom as we dare  (p.\ 297).
\end{quotation}

His published article concludes by suggesting “the search for suitable axioms to be added to the basic axioms,”
with “one source of axioms to be studied for their consistency with the basic axioms” \cite{Hailperin 1944}’s finite axiomatization of \MyIndex{New Foundations} (pp.\ 307–8).
	\smallskip
	
	Assume that we intuit the existence of the \MyIndex{Universal Set} (or, indeed, any upper) while holding some sort of iterative conception of set existence.
	If we also intuit Comprehension, and hence the existence of \MyIndex{Russell Set}, 
	then one more iteration would yield its successor, which by the ascending link part of Theorem \ref{Main Result} is distinct.  The iterative process would also yield the \MyIndex{Russell Set}’s predecessor, which is likewise distinct.
	
	The iterative intuition requires continuing both these processes.  Calling a halt would be completely arbitrary; halting the successor iteration is analogous to claiming that the successor function on the natural numbers stops at some particular integer.  The predecessor iteration generates something new, given any upper: an indefinitely-extensible process of descending links.
	
	The analogous descending image I suggest for this is a non-standard model of Peano Arithmetic, visualizing the descending links as decreasing non-standard (i.e., externally non-finite) positive integers.  Visualize the ascending links as before: increasing non-negative standard (i.e., externally finite) integers.  Intuiting the existence of the Russell Set then is analogous to claiming that both processes stop, and at the same place.  This stopping point is then analogous to something which is both the last standard integer and the first non-standard integer, and which has neither a successor nor a predecessor.  Thus the consequences of the combined intuition do not cohere with the basic intuition.

	 \subsection{The Lasso and the Wand}
	 I address here the intuition presented in \cite{Forster 2008} on ill-founded set theories; he attributes the intuition of the lasso to \cite{Boolos 1971} on well-founded sets, who in turn credits Kripke for the original insight.  Boolos’ use of the lasso intuition, however, is for his iterative conception of well-founded sets, which is discussed above.  Forster’s employment of the lasso plus a wand for ill-founded sets is different; for generality and simplicity, I will use only the basic part of the intuition here. 
	 
	 This intuition also relies on a temporal metaphor, where some preexisting sets are selected by a lasso, and the wand creates two new sets: one containing the lassoed sets, and the other containing those preexisting sets not lassoed.  Note that the intuition must also determine whether each new set contains itself.  (Forster’s further intuitions supply these answers, and determine whether the new sets will contain each other, as well as later sets when this process is iterated; but that is not needed for our purposes.)  
	 
	 \smallskip
	 The crucial observation is simply that, whichever decision is made as to self-membership, and no matter what the lasso has collected, the decision avoids the Russell Paradox:  If a new set is self-membered, it is not a lower by Lemma \ref{1}; if it is not self-membered, it is not an upper by Lemma \ref{2}.  Either way, the intuition that some such set is strictly Russellian amounts to a refusal to make the required decision about membership in the newly-created set.

\bigskip	 
	\cite{Button 2023} contains a more general intuition, with wands satisfying various requirements, but not lassos, and with a modified temporal metaphor:  Sets are viewed, not as constructed at various levels, but merely indicated there.
	
	\smallskip
	The generality shifts the avoidance of paradox to requirements on particular wands.  The wand for equinumerosity (and for Church’s generalization), for example, satisfies these requirements (§11).  The obvious path to the \MyIndex{Russell Set}, the equivalence relation of being equally self-membered (i.e., \(x ∈ x ⟺ y ∈ y\)) is blocked by the requirements on a wand/set theory in his definition 5.1.
	 
\subsection{The Platonist Conception}\label{The Platonist Conception}
Mathematical Platonism is now less popular than the iterative conception, but it is worth recalling that it was part of the outlook of two of the greatest logicians in history, and it may still be relevant to understanding the \MyIndex{Russell Paradox}.
 \begin{quotation}
Die Zeit ist nur ein psychologisches Erforderniss zum Zählen, hat aber mit dem Begriffe der Zahl nichts zu thun.

\textit{Time is only a psychological necessity for numbering, it has nothing to do with the concept of number.}  

\cite{Frege 1884} §40 p.\ 53, J.L. Austin’s translation.
\end{quotation}

	 \begin{quotation}
 …it seems that the vicious circle principle in its first form [“definable only in terms of,” preceding page] applies only if the entities involved are constructed by ourselves.…

Classes and concepts may, however, also be conceived as real objects, … existing independently of our definitions and constructions.

It seems to me that the assumption of such objects is quite as legitimate as the assumption of physical bodies and there is quite as much reason to believe in their existence.

 \medskip
 
 …our logical intuitions would then remain correct up to certain minor corrections, i.e., they could then be considered to give an essentially correct, only somewhat “blurred,” picture of the real state of affairs. 
 
 \cite{Gödel 1944}, pp.\ 456, 466–7.
\end{quotation}

Frege’s original conception retains interest for Platonists even after the paradox.  It is of course a profound mystery how it achieved so much while its most significant axiom led to such a short contradiction.  But that achievement is enormous:  Frege provided an absolutely rigorous foundation for arithmetic, and was well on his way to providing a foundation for analysis.  Even \MyIndex{Whitehead} \& \MyIndex{Russell}’s \MyIndex{\textit{Principia Mathematica}} (according to \cite{Gödel 1944} p.\ 448) was substantially less rigorous.  Still less rigorous are typical logic articles today, not to mention proofs (more properly proof sketches) where there is serious disagreement over whether they actually constitute a proof.  Compare Frege’s boast:  
 \begin{quotation}
  Wenn etwa jemand etwas fehlerhaft finden sollte, muss er genau angeben können, wo der Fehler seiner Meinung nach steckt: in den Grundgesetzen, in den Definitionen, in den Regeln oder ihrer Anwendung an einer bestimmten Stelle. 
\end{quotation}
\begin{quotation}
   \textit{If anyone should believe that there is some fault, then he must be able to state precisely where, in his view, the error lies: with the basic laws, with the definitions, or with the rules or a specific application of them.}  \cite{Frege 1893}, p.\ VII, Ebert \& Rossberg translation
\end{quotation}

\medskip	 
	 By the Platonist conception, sets are not constructed in time, but exist independently of our constructions:  all we can do is point to them.  (“\MyIndex{Platonist}” is not ideal terminology; “\MyIndex{Pythagorean}” or “\MyIndex{atemporal}” might do as well.)  A set existence axiom merely points out a preexisting set with the required property; it no more creates a new set than we can create a new Boolean value.  
	 
	 \smallskip
	 Presumably these sets are closed under some interesting operations, but for this analysis, I assume only the usual trivial successor and predecessor axioms of \MyIndex{Incomprehensive Set Theory}.	 
	 In the case of the Russell Set, Theorem \ref{Main Result} makes it clear that no set we point to will be strictly Russellian:  Either the ascending or descending link part of the theorem will demonstrate the inadequacy of any plausible candidate, by pointing to a less bad approximation, which in turn is not strictly Russellian either.  
	 
	 By the conception of sets as existing independently of our constructions, the claim that there is a Russell Set is not, as Gödel would have it, an essentially correct intuition requiring minor correction.  
	 It is simply wishful thinking, analogous to wanting the phrase “sixth Platonic solid” to have a referent, while not actually intuiting which shape this might be.

\subsection{Conclusion}

I claim that Frege was right to abandon sets as extensions of concepts; a sufficiently deep and Platonist contemplation of this distinction, and of his rejection (quoted above) of the temporal metaphor, might have saved him from paradox and despair.  I also argue that this would not require abandoning sets completely, not even equivalence classes under equinumerosity as a formalization of cardinality.  
Rigorous definition by abstraction as the origin of cardinal numbers was the key to Frege’s philosophical anti-formalist doctrine that, as [Soames 2019] p.\ 97 summarizes, “\textit{Numbers are whatever they have to be in order to explain our knowledge of them.}  To discover what they are, we must give definitions of numbers that allow us to deduce what we pre-theoretically know,” 
which is a partial answer to the problem, still with no complete solution, of our knowledge of mathematical truths.  

It is also an alternative to arbitrariness in the implementation of cardinal numbers as initial Zermelo or von Neumann ordinals (\cite{Benacerraf 1965}, p.\ 281), and to the inconsistency of a “univocal role” for Frege cardinals (p.\ 284).\footnote{Frege’s 
definition of numbers in terms of one-one correspondence also pre-emptively rebuts Benacerraf’s claim (p.\ 290) that natural numbers have meaning only in the context of the sequence of all numerals.  A native speaker of a language without an indefinitely extensible numeric notation (\cite{Comrie 2022} p.\ 3), 
but who grasped the notions of bijection and pairs, might also be able to grasp Frege’s notion of the number two without first having to accept a sequence of all numbers.  

Recall Frege’s note (\cite{Frege 1884} §73 p.\ 85) that the etymology is deceptive here:  “\MyIndex{equinumerosity}” morphologically presupposes numerals, and Frege’s term “\MyIndex{gleichzahl}ig” morphologically presupposes numbers.  Austin’s translation “equal” avoids this difficulty but has its own problems.  “eindeutigen Zuordnung” (§62 p.\ 74) is the base concept, which Austin translates as “one-one correlation.”  The morphology may be helpful here:  in English, “one-point(-adjective)(-verb) to-arrange(-noun).”  Even the use of “one” increases the ontology unnecessarily; “each” would do as well.}  
It is an incomplete explanation of the otherwise unreasonable effectiveness of mathematics, since it does not cover higher mathematics, especially fields invented with no thought of practical applications which later acquired them.
\medskip

Considerable technical challenges remain, however, and even before Frege learned of the paradox, avoiding arbitrariness in the definition of real numbers (not to mention that of Julius \MyIndex{Cæsar}, \cite{Dummett 1991} pp.\ 157, 165, 280–1) had no obvious solution.\footnote{Given 
Frege’s suspicion of the unclarity of natural languages, the notorious Julius \MyIndex{Cæsar} problem would not have been relevant to his formalization program until it had reached formal definitions of names of biological entities.  While much of recent philosophy of mathematics and its notation is concerned with human language, this has not been a universal preoccupation.  (It would not be straightforward to read Frege’s two-dimensional concept writing aloud in any human language.  It is also worth recalling that Frege did not confine himself to a \MyIndex{first-order language}.)  

The first writing had no necessary connection with any particular natural language.  (Its inventors probably spoke primarily Sumerian, but the nature of the writing makes certainty impossible  \cite{Michalowski 2004}, pp.\ 20 \& 25.)  Given the current difficulties caused for artificial intelligences by reliance on predominantly one natural language (English), in the long run, the connection between human natural languages and logical writing may turn out to have been temporary.}

\cite{Church 1974a} has an axiom of the existence of a generalization of equinumerosity equivalence classes (including “relation number[s] in the sense of \textit{\MyIndex{Principia Mathematica}},” p.\ 303), though the axiom is restricted to well-founded sets, and considerably more work would need to be done to see whether its cardinalities are even more counterintuitive than Quine’s.  
Church apparently abandoned his unpublished attempts (\cite{Sheridan 2016} Appendix §C.2 pp.\ 99–100) at a unification between his set theories with a universal set and Quine’s set theory.  \censor{\cite{Sheridan 2016}} is a largely unsuccessful attempt to extend Church’s published theory while avoiding one of the difficulties of New Foundation’s cardinalities, the non-existence of the \MyIndex{singleton function}.
\cite{Oberschelp 1973} contains a promising approach for model-building and has the \MyIndex{singleton function} as a set, but both its philosophical underpinnings and the formalism of its main proof are far from obvious.  \cite{Burgess 2005} presents a quite different formalization of cardinal numbers, which might have reduced Frege’s despair over his foundational program.
\bigskip

	 That our intuitions are sometimes correct remains the most profound mystery of mathematics.  The failure of some of our intuitions can be considerably less profound, and needs to be accepted rather than resisted.	 In retrospect, it should have been no more surprising that not all predicates correspond to a set, than that not all sets correspond to a definable predicate.
	 Attempting to repair such a broken intuition would be the metaphorical equivalent, not of correcting blurred vision, but of searching for a smaller version of something after it turned out to be an optical illusion.

  \section{Appendix:  A Less Pathological Counterexample to a Self-Membership Query of Forster}
  
  In email about an earlier version of this paper, Thomas Forster raised the question of whether (in my notation): \(x∈x ⇒ x\− ∈ x\−\).  There are simple pathological counterexamples, such as a \MyIndex{Quine atom} (whose existence seems independent of \MyIndex{New Foundations}) \(q\) such that \(q = \{q\}\).  (Observe that \(q ∈ q\),  and \(q\− = q ∖ \{q\} = \{q\} ∖ \{q\} = ∅\), and \(∅ ∉  ∅\).)
  I am grateful to an anonymous referee for the observation that, in the context of \MyIndex{New Foundations}, a counterexample may be obtained with a \MyIndex{Rieger-Scott permutation}.

The generalization is provable in \MyIndex{Incomprehensive Set Theory} of any upper, by 4.4 \& 3.2, and trivially true of any lower by 3.1.  It holds in some simple cases in constructions like Church’s and \censor{mine}, e.g., the Universal Set U (where \(U∈U ∧ U\− ∈ U\−\)), and equivalences classes (or simple combinations thereof) such as the set of all singletons, which is not a member of itself.

That is however a limitation of those constructions rather than a general trait, according to the shared intuitions in §\ref{Shared Intuitions} of sets as extensional objects rather than only extensions of concepts.  It is a relic of the goal of this technique, to provide extensions for some broad concepts like equinumerosity equivalence classes; the constructions do not go very far beyond that goal.  By this intuition, whether one object belongs to a given set should be independent, in normal cases, of whether some other object belongs to it.  (A partial formulation of this intuition would be generalizations of the Axioms of Predecessor and Successor, to the existence of \(x ∖ \{y\}\) and of \(x ∪ \{y\}\), even when \(x ≠ y\).)

A non-pathological counterexample (which nonetheless exposes issues about individuation and identity), \(N\), with \(N = U ∖ \{N\−\}\), where \(U\) is the \MyIndex{Universal Set}, may be obtained via a small modification to the construction in \cite{Sheridan 2016}.  (This technique apparently would not work with the similar construction in \cite{Church 1974a}.)

This construction can be repeated, so that there is another such set with \(P = U ∖ \{P\−\}\), but this raises the question of whether it would accord with the intuition of sets as extensional objects to have two such sets.
Arguably this intuition requires that there be only one such object, but it is not clear what  general principle would justify this.  \MyIndex{Finsler}’s Axiom of \MyIndex{Strong Extensionality} might suffice, but may present difficulties \cite{Forster 1998}.
Such questions of identity, and axiomatising intuitions about the nature of sets as extensional objects, are promising areas for future work.

I here sketch the construction referenced in footnote \ref{footnote_Forster}, of a set satisfying \(N = U ∖ \{N\−\}\); consequently \(N∈N\) but \(N\−∉N\−\).  For non-readers of \cite{Sheridan 2016}, I begin with an informal summary of the construction technique.

\subsection[Brief Summary of  Membership Relation in \cite{Sheridan 2016}]{Brief Summary of  \(∈_3\) in \cite{Sheridan 2016}}

The \MyIndex{base theory} is \MyIndex{RZFU}, a theory similar to Zermelo-Frænkel with a global well-ordering, with the Axiom of Extensionality modified to permit urelements, and a bookkeeping axiom to keep track of them.  (Other differences from conventional ZFC with urelements need not concern us here,\footnote{E.g., the theory does not assume the Axiom of \MyIndex{Foundation}, but the axioms of \MyIndex{Replacement}, \MyIndex{Power Set}, and \MyIndex{Sum Set} are restricted to well-founded sets} \censor{and were in retrospect not worth the effort}.)
The construction below will use a relation similar to the main membership relation, \(∈_3\), in \cite{Sheridan 2016}, which I will call \(∈_4\).  
For the benefit of non-readers of that work, I will omit many minor details and sketch only the relevant parts of the construction.  A somewhat less brief summary is in \cite{Sheridan 2014}.

The \(∈_3\) construction is an interpretation over \censor{my} base theory (whose membership relationship is called \(∈_0\) for clarity), where \(∈_3\) extends \(∈_0\) when an urelement is on the right.
The point to the construction is to define \(∈_3\) in terms of \(∈_0\), and then prove the \Def{interpretation} of each axiom, i.e., its definitional expansion, including expansion of symbols defined in terms of \(∈_3\).  This then provides a purely syntactic\footnote{Despite Church’s use of the word “model,” e.g., page 305 of \cite{Church 1974a}.} proof of the consistency of the theory relative to the base theory:  Any proof of a contradiction from the axioms of interest can be converted into a proof of a contradiction in the base theory.

\subsubsection{The Sequence of Equivalence Relations}
Much of the complexity in both Church’s and  \censor{my} relative consistency proofs came from \censor{our} sequences of equivalence relations, which are not relevant to the current construction, and which I will ignore except for two trivial ones, the first and the last in the sequence.  \censor{My} sequence was from \(0\) to an arbitrary ordinal \(𝜇\), which was a generalization of Church’s choice of \(1\) through an arbitrary natural number \(m\).  For greater notational uniformity in that construction, \censor{I} supplemented Church’s relations with two trivial ones, \(≎_0\) and \(≎_𝜇\), which will be the only relations used here: \(≎_0\) is the trivial relation true of any two objects, and \(≎_𝜇\) is equality.  (The other ordinals between \(0\) and \(𝜇\), if any, will be elided here with “…”.) 

\censor{My} construction relied on a form of Global Choice to select a representative from each equivalence class; given the restriction to two trivial relations here, the user may think of \(𝜇\) as \(1\), and of the representative for \(x\)’s \(≎_𝜇\)-equivalence class (denoted “\(𝜇\rep(x)\)”) as \(x\) itself.  The representative of the sole \(≎_0\)-equivalence class (denoted “\(0\rep(x)\)”, for any \(x\)) may be thought of as \(∅\) for the sake of definiteness.

Sequences of length \(𝜇+1\) (which in conformity to Church’s terminology  \censor{I} called \(𝜇+1\)-tuples and surround with “\((…)\)” rather than “\(<…>\)”) are used to tag urelements.  These urelements represented new sets, such as the Universal Set, in the sense of \(∈_3\).  (\(∈_3\) coincides with \(∈_0\) when a non-urelement is on the right side.)  The tagging was via a bijective formula \(Υ''(x)\) (locally abbreviated, for convenience, “\(*\)”, with “\((…)\)” omitted; see \censor{§20.2 “Excess Urelements”} of \cite{Sheridan 2016} for details.)  The \(𝜇+1\)-tuple tagging an urelement was called its \textbf{Index}.

A simplifying intermediary \censor{I} added to Church’s definition was what \censor{I} called the “sprig of \(x\) for an urelement \(u\) with tag \(L\)”:  the collection of pairs \(<j, j\rep(x)>\) (for \(j ≤ 𝜇\)) such that \(j\rep(x)\) was in the \(j\)\textsuperscript{th} element of the tag \(L\) of the given urelement \(u\).

\subsubsection{\(∈_3\)}
The membership relation \(∈_3\) was extended for urelements (in the sense of \(∈_0\)), by stipulating for an urelement \(u\), that \(x ∈_3 u\) if the sprig of \(x\) for \(L\) had an odd number of members.  In the case we consider here, considering only ordinals \(0\) and \(𝜇\), the only possible odd number is one; i.e., if the sprig of \(x\) for \(L\) contains \(0\rep(x)\) or \(𝜇\rep(x)\) but not both.

Informally, in this type of construction, self-membership is mainly signalled only by the zeroth (complement) component, and can otherwise normally be affected only by the last (\(𝜇\)) component listing exceptions.  Components signalling Church’s or  \censor{my} equivalence relations don’t seem to provide self-membership; for instance, the Frege-Russell cardinal \(1\) (the set of all singletons), has more than one element.  (This type of interpretation doesn’t provide generalized Frege-Russell cardinals for big sets.) 

Thus, for instance, \(*(\{0\rep(0)\}, … \{\})\) will represent the \MyIndex{Universal Set}, U, in the sense of \(∈_3\):  The \(0\)\rep\  of anything is in the zeroth element of the sequence, and the other element(s) of the sequence are empty; thus the sprig of anything for tag \((\{0\rep(0)\}, … \{\})\) has one member, and one is odd.  Similarly, \(U\−\) would be represented by \(*(\{0\rep(0)\}, … \{𝜇\rep(U)\})\).

\subsection{\(∈_4\)}

I will here briefly sketch the small changes needed for a new membership relation \(∈_4\), which will provide the required counterexample.  The only change from \(∈_3\) in §20.3 of \cite{Sheridan 2016} is a tweak to the bijective formula \(Υ''\):  

Choose two distinct urelements \(M\) and \(N\).  Consider the following two indexes:

\[ n = (\{0\rep(0)\}, … \{𝜇\rep(M)\}) \]

\[ m = (\{0\rep(0)\}, … \{𝜇\rep(M), 𝜇\rep(N)\}) \]

We tinker with the bijective formula \(Υ''\) mapping Indexes to urelements.  Define \(Υ'''\) as \(Υ''\), except that we swap values so that it maps (if it does not already) \(n\) to \(N\) and \(m\) to \(M\), and (to avoid collisions) \(Υ''\text{-}inverse(N)\) to \(Υ''(n)\), and \(Υ''\text{-}inverse(M)\) to \(Υ''(m)\).  By my customary abuse of notation, I will again recycle the symbol “\(*\)” and locally abbreviate \(Υ'''(x)\) to \(*x\), without parentheses.

The definition of \(∈_4\) is then textually identical to that for \(∈_3\), given the new local definition of \(*\).

\subsection{Properties of M and N}

Thus we have 

\[ N = *(\{0\rep(0)\}, … \{𝜇\rep(M)\}) \]
\[ M = *(\{0\rep(0)\}, … \{𝜇\rep(M), 𝜇\rep(N)\}) \]

\begin{lemma} \(N\) is, in the sense of \(∈_4\), \(U ∖ \{M\}\) \end{lemma}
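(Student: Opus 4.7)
The plan is to unpack the definition of $\in_4$ for the urelement $N$ and perform a two-case analysis on an arbitrary set $x$.

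First, recall that under $\Upsilon'''$, the urelement $N$ has Index $n = (\{0\text{-}rep(0)\},\ldots,\{\mu\text{-}rep(M)\})$. Since $N$ is an urelement in the sense of $\in_0$, the definition of $\in_4$ (inherited verbatim from that of $\in_3$, with $\Upsilon'''$ substituted for $\Upsilon''$) says that $x \in_4 N$ iff the sprig of $x$ for the tag $n$ has an odd number of members. In our setting where only the components at $0$ and $\mu$ are non-trivially populated, this sprig is a subset of $\{\langle 0, 0\text{-}rep(x)\rangle, \langle \mu, \mu\text{-}rep(x)\rangle\}$, so ``odd'' means ``exactly one.''

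Next I would compute the sprig for an arbitrary $x$. The $0$th component of $n$ is $\{0\text{-}rep(0)\}$; since every object has the same $\ncong_0$-representative (namely $0\text{-}rep(0)$, conventionally $\emptyset$), the pair $\langle 0, 0\text{-}rep(x)\rangle$ always lies in the sprig. The $\mu$th component of $n$ is $\{\mu\text{-}rep(M)\}$; since $\ncong_\mu$ is equality, $\mu\text{-}rep(x) = x$ and so $\langle \mu, \mu\text{-}rep(x)\rangle$ lies in the sprig iff $x = M$.

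The case split is then immediate. If $x \neq M$, the sprig has exactly one element, which is odd, so $x \in_4 N$. If $x = M$, the sprig has exactly two elements, which is even, so $M \notin_4 N$. Combining these, the extension of $N$ in the sense of $\in_4$ is precisely the collection of all sets other than $M$, i.e.\ $U \setminus \{M\}$. The only ``obstacle'' is purely bureaucratic: one must verify that the swap performed by $\Upsilon'''$ preserves the bijectivity needed to invoke the $\in_3$-style definition unchanged, but that was arranged in the construction of $\Upsilon'''$ by compensating swaps, so no further work is required here.
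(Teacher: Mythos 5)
Your proposal is correct and follows essentially the same route as the paper's own proof: unpack $x \in_4 N$ as oddness of the sprig of $x$ for the tag of $N$, observe that the zeroth component always contributes a member while the $\mu$-component contributes one exactly when $x = M$, and conclude $x \in_4 N \iff x \neq M$. The extra remarks about the explicit two-element case split and the bijectivity of the tweaked tagging formula are harmless elaborations of what the paper leaves implicit.
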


\begin{proof}
	By definition, \( x ∈_4 N ⇔ odd(sprig(N, x)) \).
There is only one \(0\rep\), so \(0\rep(x)\) is always in \{0\rep(0)\}.
So the sprig has an odd number of members unless \(𝜇\rep(x) ∈ \{𝜇\rep(M)\}\) is also true, i.e., iff \(x = M\).  So \(x ∈_4 N ⇔ x ≠ M\), as required.\end{proof}

\begin{corollary} \(N ∈_4 N\) \end{corollary}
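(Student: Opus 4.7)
The plan is to invoke the preceding lemma directly. That lemma establishes the extensional identity (in the sense of $\in_4$) $N = U \setminus \{M\}$, i.e.\ for every $x$ one has $x \in_4 N \iff x \neq M$. Hence the corollary $N \in_4 N$ reduces to the single assertion $N \neq M$.

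So first I would instantiate the lemma at $x = N$, obtaining $N \in_4 N \iff N \neq M$. Then I would discharge the right-hand side by appeal to the construction: at the start of §\,``$\in_4$'', $M$ and $N$ were explicitly \emph{chosen to be distinct urelements} (in the sense of $\in_0$), so $M \neq N$. One small thing to check is that this $\in_0$-level distinctness suffices: the objects $M, N$ are urelements, so $\in_4$ (like $\in_3$) does not collapse them, and equality for urelements is just $\in_0$-equality. (Equivalently, one could note that the tags $n$ and $m$ assigned to $N$ and $M$ by $\Upsilon'''$ differ — $m$ has $\mu\rep(N)$ in its last component whereas $n$ does not — and $\Upsilon'''$ is bijective, so the associated urelements are distinct.)

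The proof is therefore essentially a one-liner: by the lemma and the fact that $M \neq N$, we obtain $N \in_4 N$. There is no real obstacle — the main work was done in the lemma and in the careful bookkeeping that set up $\Upsilon'''$ as a bijection sending the indexes $n, m$ to $N, M$ respectively. The only thing one must be mildly careful about is not confusing the two membership relations: the distinctness of $M$ and $N$ is a fact about $\in_0$, while the self-membership conclusion is about $\in_4$, and it is precisely the content of the preceding lemma that bridges the two.
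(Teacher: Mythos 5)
Your argument is correct and is exactly the reasoning the paper leaves implicit: the preceding lemma gives $x \in_4 N \iff x \neq M$, and instantiating at $x = N$ together with the stipulation that $M$ and $N$ are distinct urelements yields $N \in_4 N$. The paper states this as an unproved corollary, so your one-liner (with the sensible extra check that $\in_0$-level distinctness of the urelements is what is needed) is the intended proof.
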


\begin{lemma}  \(M\) is, in the sense of \(∈_4\), \(U ∖ \{M, N\}\) \end{lemma}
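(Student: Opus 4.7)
The plan is to follow the preceding lemma's proof almost verbatim, this time dissecting \(M\)'s tag rather than \(N\)'s.  By definition of \(∈_4\), \(x ∈_4 M ⇔ odd(sprig(M, x))\), so the task reduces to counting, for each \(x\), how many components of the index \((\{0\rep(0)\}, … \{𝜇\rep(M), 𝜇\rep(N)\})\) contain the corresponding rep of \(x\).

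First, the zeroth component \(\{0\rep(0)\}\) always contains \(0\rep(x)\), since there is only one \(0\rep\); this contributes one pair to every sprig.  The intermediate components denoted ``\(…\)'' are elided under our restriction to the trivial relations \(≎_0\) and \(≎_𝜇\), so they contribute nothing, exactly as in the proof of the previous lemma.  The new feature is the \(𝜇\)th component: it contains \(𝜇\rep(x)\) iff \(𝜇\rep(x) ∈ \{𝜇\rep(M), 𝜇\rep(N)\}\), which—since \(≎_𝜇\) is equality and \(M, N\) were chosen distinct—holds iff \(x = M\) or \(x = N\).

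Summing up, the sprig of \(M\) at \(x\) has exactly one member (odd) when \(x ∉ \{M, N\}\) and exactly two members (even) when \(x ∈ \{M, N\}\); hence \(x ∈_4 M ⇔ x ≠ M ∧ x ≠ N\), which is the claim.  I do not anticipate any serious obstacle, since the argument is purely a parity count paralleling the previous lemma; the only step worth double-checking is that the tinkered bijection \(Υ'''\) really does map the index \((\{0\rep(0)\}, … \{𝜇\rep(M), 𝜇\rep(N)\})\) to the urelement \(M\), but this is precisely the second of the two swaps arranged when \(∈_4\) was defined just above.
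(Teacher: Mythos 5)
Your proposal is correct and follows essentially the same route as the paper's own proof: both reduce \(x \in_4 M\) to the parity of the sprig, note that the zeroth component always contributes one member, and observe that the final component contributes a second member exactly when \(x = M\) or \(x = N\). The only difference is that you spell out the count and the bijection-swap check explicitly, where the paper compresses this into a one-line ``Similarly.''
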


\begin{proof}
	Similarly, \(x ∈_4 M ⇔ odd(sprig(M, x))\),
which is true unless \(𝜇\rep(x) ∈ \{𝜇\rep(M), 𝜇\rep(N)\}\), iff
\(x = M ∨ x = N\).

So \(x ∈_4 M ⇔ x ≠ M ∧ x ≠ N\), as required.\end{proof}

\begin{corollary} \(M ∉_4 M\) \end{corollary}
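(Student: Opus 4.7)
The plan is to derive the corollary as an immediate consequence of the preceding lemma, which establishes that $M$, in the sense of $\in_4$, is $U \setminus \{M, N\}$; equivalently, $x \in_4 M \Leftrightarrow x \neq M \land x \neq N$. Once this biconditional is in hand, the corollary is a matter of instantiation.

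Specifically, I would substitute $x := M$ into the biconditional supplied by the lemma. The right-hand side becomes $M \neq M \land M \neq N$. The first conjunct is false by the reflexivity of equality, so the conjunction is false regardless of whether $M$ and $N$ are distinct (though they are, by our choice of two distinct urelements at the start of the construction of $\in_4$). Therefore the left-hand side is false, i.e.\ $M \notin_4 M$.

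There is no real obstacle here: the hard work of the argument sits inside the preceding lemma, whose proof in turn relies on the parity calculation governing sprigs and the tweak to the bijective formula $\Upsilon'''$ that forces the index $m = (\{0\text{-}rep(0)\}, \ldots, \{\mu\text{-}rep(M), \mu\text{-}rep(N)\})$ to be mapped to $M$ itself. Once that self-referential identification is in place, self-non-membership of $M$ is simply the observation that $M$ is, by construction, one of the two elements excluded from itself. If anything merits emphasis in the write-up, it is only that one must be careful to distinguish $\in_4$ from the base-theory $\in_0$: the statement $M \notin_4 M$ concerns the interpreted membership relation, not the underlying urelement status of $M$ in the base theory RZFU.
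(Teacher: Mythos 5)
Your proof is correct and matches the paper's intent exactly: the corollary is stated without proof precisely because it follows by instantiating $x := M$ in the biconditional $x \in_4 M \iff x \neq M \wedge x \neq N$ from the preceding lemma, where the first conjunct fails by reflexivity. Your additional remarks about the role of the index swap and the distinction between $\in_4$ and $\in_0$ are accurate but not needed for this step.
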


\begin{corollary} \(M\) is, in the sense of \(∈_4\), \(N\−\) \end{corollary}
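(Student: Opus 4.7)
The plan is to read off the result directly from the two preceding lemmas, together with the defining property of the predecessor. By the Axiom of Existence of Predecessor (interpreted in the sense of \(∈_4\)), \(N\−\) is the unique set such that, for all \(x\), \(x ∈_4 N\− ⇔ x ∈_4 N ∧ x ≠ N\). So it suffices to show that \(M\) has exactly this extension under \(∈_4\).

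First I would invoke the lemma that \(x ∈_4 N ⇔ x ≠ M\). Conjoining with \(x ≠ N\) gives \(x ∈_4 N ∧ x ≠ N ⇔ x ≠ M ∧ x ≠ N\). Next I would invoke the other lemma, which states \(x ∈_4 M ⇔ x ≠ M ∧ x ≠ N\). Chaining these two biconditionals yields \(x ∈_4 M ⇔ x ∈_4 N ∧ x ≠ N\), so \(M\) is coextensive with \(N\−\) in the sense of \(∈_4\), and hence equal to it by extensionality (in the interpreted theory).

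I do not expect any genuine obstacle; the content of the corollary is essentially a set-theoretic identity \(U ∖ \{M, N\} = (U ∖ \{M\}) ∖ \{N\}\), and the preceding lemmas have already done the bookkeeping with tags, sprigs, and parities that makes the two sides correspond to \(M\) and to \(N\−\) respectively. The only subtlety worth flagging is that the appeal to the Predecessor Axiom is to its interpretation under \(∈_4\), whose validity in the relative-consistency construction of \cite{Sheridan 2016} is being taken for granted here, just as in the two preceding lemmas.
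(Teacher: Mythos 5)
Your proposal is correct and matches the paper's (implicit) reasoning: the paper states this as an unproved corollary of the two preceding lemmas, relying on exactly the computation $N\pred = N \setminus \{N\} = (U \setminus \{M\}) \setminus \{N\} = U \setminus \{M, N\} = M$ that you spell out via the interpreted Predecessor Axiom. Your explicit chaining of the two biconditionals is just a fuller write-up of the same one-step argument.
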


\begin{corollary} In the sense of \(∈_4\), \(N ∈_4 N ∧ N\− ∉_4 N\−\) \end{corollary}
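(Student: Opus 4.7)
The plan is to observe that this final corollary is essentially a repackaging of the three preceding corollaries, so the work is just to combine them. Concretely, the statement is a conjunction, and I would handle each conjunct separately.

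For the first conjunct, $N \in_4 N$, there is nothing to do: this is precisely the content of the earlier corollary following the lemma that identifies $N$, in the sense of $\in_4$, with $U \setminus \{M\}$. Since $N \ne M$ (they are distinct urelements by the choice made in the definition of $\Upsilon'''$), the characterization $x \in_4 N \Leftrightarrow x \ne M$ applied to $x = N$ yields $N \in_4 N$.

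For the second conjunct, $N^{--} \notin_4 N^{--}$, the key step is to rewrite $N^{--}$ as $M$ using the corollary that states $M$ is, in the sense of $\in_4$, $N^{--}$. Once that substitution is made, the claim reduces to $M \notin_4 M$, which is exactly the earlier corollary immediately following the lemma identifying $M$ with $U \setminus \{M, N\}$. A small point worth flagging: this substitution is happening inside the interpretation, so strictly speaking I should note that $N^{--}$ (the predecessor in the sense of $\in_4$, i.e., under the axiom of Predecessor as interpreted) is well-defined and unique, and that the lemma showing $M = N^{--}$ is an identity in the interpretation, which is what licenses substituting $M$ for $N^{--}$ in the self-membership claim.

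Combining the two conjuncts by conjunction introduction gives the corollary. The main (very mild) obstacle is just being careful that all membership assertions are read in the $\in_4$ sense and that $N^{--}$ denotes the object constructed by the Predecessor axiom as interpreted under $\in_4$; once that bookkeeping is clear, the proof is a one-line citation of the preceding three corollaries.
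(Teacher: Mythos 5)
Your proof is correct and takes exactly the route the paper intends: the paper offers no explicit argument for this corollary, treating it as the immediate conjunction of the preceding corollaries ($N \in_4 N$, $M \notin_4 M$, and $M$ being $N^{--}$ in the sense of $\in_4$), which is precisely your decomposition. Your added remark about the substitution being licensed by the identity holding inside the interpretation is a sensible piece of bookkeeping that the paper leaves implicit.
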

\textit{Q.E.D.}

\section*{Acknowledgments}
	 \censor{This paper began in 2005 as a reconstruction of my essay on the \MyIndex{Russell Paradox} for \MyIndex{Angus Macintyre}’s Philosophy of Mathematics course in 1980, which provided the philosophical motivation for my work on Church’s Set Theory with a Universal Set \cite{Church 1974a}.  The reconstruction was heavily influenced by my unpublished article on singletons in Skala’s set theory \cite{Skala 1974}, which had been implicitly superseded by Kühnrich and Schultz’s earlier article \cite{Kuhnrich 1980}; Skala’s terminology is the origin of my terms “upper” and “lower.”  Various of Thomas Forster’s articles and corrections were also an influence, though the reconstruction was nearly complete before I read the most relevant of these, the November 2005 version of “The Iterative Conception of Set” \cite{Forster 2008}.}
	 
	 \censor{I am grateful to two anonymous referees for criticisms, suggestions, and demands for additional philosophical arguments, to my wife Olga Miroshnychenko for assistance while writing and rewriting, to Roger W. Janeway for discussions of the orginal version of this article and corrections and a metaphor for the final version, and to Murdoch J. Gabbay for suggestions which led to substantial improvements and additions.}

 \else 


\section*{Introduction}
	 I present a fresh look at the Russell Paradox, using only trivial set existence axioms, with approximations to the hypothetical Russell Set of all sets which do not belong to themselves.  This will reveal that an approximation which contains too little leads to a better approximation which still contains too little.  Similarly, an approximation which contains too much will give rise to a regress of successively less bad approximations.  Each such series of approximations turns out to be indefinitely extensible, so an intuition that these  processes terminate will not withstand scrutiny.  I will show, for each of the major types of philosophical intuition about set existence, how that intuition fails to cohere with a supposed intuition implying the existence of the Russell Set.  
	 
	 The Russell Paradox will thus be seen to be a simple mistaken claim about set existence; so the unrestricted Axiom of Comprehension is a mistake to be abandoned, not an important intuition to be rescued, despite the suggestions of, among others, Kurt Gödel and Alonzo Church to the contrary.\footnote{See the quotations in §\ref{The Platonist Conception} “The Platonist Conception” and §\ref{Ill-Founded Iterative Conception} “The Ill-Founded Iterative Conception,” respectively.}  This will suggest that attempts to reconcile Comprehension with set theory Platonist enough to have a \MyIndex{universal set}, such as Quine’s New Foundations \cite{Quine 1937}, are philosophically misguided. 

	 The alternative intuition proposed below is a fully Platonist conception of sets as extensional objects without reliance on the temporal metaphor of construction.  A Platonist may observe that sets are closed under certain operations, though that is different from creating new ones; 
	 but this article requires axioms only for two trivial operations.  Church’s “Set Theory with a Universal Set” \cite{Church 1974a} and Oberschelp’s “Set Theory over Classes” \cite{Oberschelp 1973} are encouraging indications that such a theory might be consistent.

 \section{Shared Intuitions} \label{Shared Intuitions}
Formalizing mathematical intuitions, especially multiple conflicting intuitions on the nature of set existence, is obviously perilous; but two beliefs seem shared across some broad groups of intuitions about sets.  

\medskip
	  	Before presenting those, for historical and motivational context, I note the Unrestricted Axiom Schema of Comprehension, which I do \textit{not} assert, and which leads immediately to the Russell Paradox (\cite{Frege 1893}, “Afterword”).  This Schema is the simplification and restriction to \MyIndex{first-order} set theory of Frege’s more general Axiom V (\cite{Frege 1893}, §20 \& §47).  Conventional Zermelo-Fraenkel set theory’s Axiom Schema of Separation restricts the set whose existence is posited by the axiom to being a subset of an existing set; Quine’s New Foundations \cite{Quine 1937} has a Comprehension Schema with a syntactic restriction on the formula \(\phi \).

 \begin{axiom}{Unrestricted Axiom Schema of Comprehension} \textit{(not asserted)} 
 
 For an arbitrary predicate \(\phi \) with \(y\) not free in \(\phi \), \[\exists y\forall x.\ x\in y \iff  \phi (x)\]
 \end{axiom}
 
 \begin{lemma} The Russell Paradox: The Unrestricted Axiom Schema of Comprehension leads immediately to a contradiction. \end{lemma}
 \begin{proof}
 	Replacing “\(\phi \)” with “\(x\notin x\)”, we obtain \(\exists y\forall x.\ x\in y \iff  x\notin x\).  (This \(y\), if it actually existed, would be the \MyIndex{Russell Set}: \(\{x \mid x\notin x\}\).)  Instantiating \(y\) and substituting \(y\) for \(x\), we obtain \(y\in y \iff  y\notin y\).
 \end{proof}

 \smallskip

\medskip
One shared intuition, I claim, is extensionality, i.e., that two sets with the same members are identical, perhaps excepting the \MyIndex{empty set} or \MyIndex{urelements}.  Individuation by its members is arguably part of the sense of the concept of set, and extensionality is shared by all the intuitions of set which I will consider below.  
 An Axiom of \MyIndex{Extensionality} will not however be needed, only the trivial observation that if something is a member of one set but not another, then the two sets are distinct.  
 Some amount of extensionality will be implied by the uniqueness requirements in the two axioms of \MyIndex{Incomprehensive Set Theory}, below.
 
 The second shared intuition is that if two (informal) collections differ only by a single element, then if one is a set, so is the other.  
 This may seem trivial, but is violated by Skala’s Set Theory \cite{Skala 1974}, which is essential to that theory’s avoidance of the Russell Paradox (implicit in \cite{Kuhnrich 1980}).  I claim that this renders that theory merely a curiosity rather than a serious attempt to capture intuitions about set existence.  
 I do not need the second shared intuition in full generality, but instead take as axioms two special cases, where the single differing element is one of the two sets.

A third point, which is a truism in logical endeavors, but perhaps merely a desideratum for intuition, is that a responsible intuition of a generalization must allow intuiting one of its instances.  How to react to a counterexample to an appealing general intuition is not straightforward, and Frege, Gödel, and Church had different reactions:  see the quotations in §\ref{Restrictive Repair Attempts} “Restrictive Repair Attempts,” §\ref{The Platonist Conception} “The Platonist Conception,” and §\ref{Ill-Founded Iterative Conception} “The Ill-Founded Iterative Conception,” respectively.

 \section{Language and Axioms}
 We work in classical \MyIndex{first order} logic with equality, plus a primitive predicate symbol \textit{“set(... )”}.  Little use will be made of this symbol; it occurs only in the axioms to avoid making assumptions about the existence or non-existence of \MyIndex{urelements}.  
 
 \medskip
 \Def{Incomprehensive Set Theory} will consist of only two axioms,  the Existence of Successor and of Predecessor, which we will use for the results to follow.  
	  	One purpose of these minimal axioms is meta-theoretic (or, rather, pre-axiomatic), as a warning of a shortcoming any plausible axiomatization of intuitions about sets must avoid.  
	  	
 	 Any assumptions about set existence, even that any sets at all exist, will be noted explicitly.
 	 What follows will be trivial unless a Lower Russellian Set or an Upper Russellian Set (defined below) exists, and the main interest is when an empty set and a \MyIndex{universal set} exist.  
 	 The ultimate goal is casting light on set theories with non-trivial existence assumptions, but for generality I keep my theory separate from those additional assumptions.

	 \medskip

	 Informally the first axiom states that, for any object \(x\), its successor \(x \cup  \{x\}\) exists and is a unique set;\footnote{I follow \cite{Church 1974a} p.\ 300 in defining this for arbitrary objects, not just numbers.} it will be abbreviated as “\(x\+\)”\index{++}.  Formally,  

		 \begin{axiom}{Existence of Successor} \[ \forall x \exists !y.\  set(y) \wedge  \forall z. \ z\in y \iff  z\in x \lor  z=x \] \end{axiom} 

 \smallskip
	 Informally, for any object \(x\), \(x \setminus  \{x\}\) exists and is a unique set; it will be abbreviated as “\(x\pred \)\index{-{}-}”.  Formally,  

		 \begin{axiom}{Existence of Predecessor} \[ \forall x \exists !y.\  set(y) \wedge \forall z.\ z\in y \iff  z\in x \wedge  z\neq x \]  \end{axiom}  

	 Despite the notation, no claim is made about whether \(x = x\+\) or \(x = x\pred \), or whether the two operations are inverses of each other.  
	 The uniqueness requirement in each axiom is a notational prerequisite to use the two operator symbols as functions.  The sethood requirements are needed to avoid unwanted commitments to the existence or non-existence of urelements.  Without them, for instance, the Predecessor Axiom would rule out the existence of urelements if a \MyIndex{Quine atom} (a set whose only member is itself) existed.
	 
 \begin{remark} Degeneracy \label{degeneracy} \end{remark} Trivially, something is coextensive with its successor iff it is self-membered; it is coextensive with its predecessor iff it is not self-membered. 

	 Thus the non-degenerate case of either \(x\pred \) or \(x\+\) is coextensive with \(x \Delta  \{x\}\), where “\(\Delta \)” denotes symmetric difference.  
	 This makes it easy to modify constructions like the interpretation in Church’s Set Theory with a Universal Set \cite{Church 1974a}, or \cite{Sheridan 2016}, to satisfy both the Axiom of Successor and of Predecessor.

 \section{Definitions}  
 \begin{defn}
	 Call something a \Def{Lower Russellian Set} (or just a \Def{lower}, for short) when all its members are non-self-membered.  In symbols: 
	 \[ \Def{lower}(x) \equiv_{df}  \forall z\in x.\ z\notin z \]
\end{defn} 
\begin{defn}
	 Call something an \Def{Upper Russellian Set} (or just an \Def{upper}, for short) when it contains every non-self-membered set.  In symbols: 
	 \[ \Def{upper}(x) \equiv_{df}  \forall z.\ z\notin z \implies  z\in x \]
 \end{defn}
 
 \begin{defn}
	 Call something a \Def{Strictly Russellian Set} if it is both a Lower Russellian Set and an Upper Russellian Set, i.e.,
 	 \[ \Def{strictly-russellian}(x) \equiv_{df}  \forall z.\ z\in x \iff  z\notin z \]
\end{defn}

Informally, \(x\) and \(y\) constitute an \Def{ascending link} iff they are distinct and \(x\) belongs to \(y\), and a \Def{descending link} iff they are distinct and \(y\) belongs to \(x\).  
Two definition schemata for expository convenience: for an arbitrary predicate \(\phi \), \(x\) and \(y\) constitute a \Def{\(\phi \) ascending link} iff they constitute an ascending link and both have the property \(\phi \), and analogously for a \Def{\(\phi \) descending link}.  Formally:
\begin{defn}
  \[ \Def{ascending link}(x, y) \equiv_{df}  x\in y \wedge  x\neq y \]
\end{defn}
\begin{defn}
  \[ \Def{descending link}(x, y) \equiv_{df}  y\in x \wedge  x\neq y \]
\end{defn}
\begin{defn}
  \[ \Def{\(\phi\) ascending link}(x, y) \equiv_{df}  x\in y \wedge  x\neq y \wedge  \phi (x) \wedge  \phi (y) \]  
\end{defn}
\begin{defn}
  \[ \Def{\(\phi\) descending link}(x, y) \equiv_{df}  y\in x \wedge  x\neq y  \wedge  \phi (x) \wedge  \phi (y)\]
\end{defn}

\pagebreak[3]
\section{Trivial Lemmata}
	 \begin{lemma} \label{1} A lower is not a member of itself. \end{lemma}
\begin{proof}\nopagebreak[4]
			 Informally, if it were, it couldn’t be.  

		 Formally, assume \(x\) is a lower, and \(x\in x\).  Since \(\forall z\in x.\ z\notin z\) by definition, \(x\notin x\).  
 \end{proof}

	 \begin{lemma} \label{2} An upper is a member of itself. \end{lemma}  

\begin{proof}
			 Informally, if it weren’t, it would have to be.  

		 Formally, assume \(x\) is an upper, and \(x\notin x\).  By definition, \(\forall z.\ z\notin z \implies  z\in x\), so, substituting \(x\) for \(z\), \(x\in x\).  
\end{proof}

	 \begin{corollary} \label{3} No set is both an upper and a lower. \end{corollary}  
	 This amounts to a restatement of the Russell Paradox:  the existence of a strictly Russellian Set leads immediately to a contradiction.  A more illuminating restatement is below, Theorem \ref{Russell Paradox Restated}.
	 
	 \begin{corollary} \label{stoppage} A lower is coextensive with its predecessor, and an upper is coextensive with its successor, by Remark \ref{degeneracy}. \end{corollary}  

	 \begin{lemma} \label{pred self} \(x\notin x\pred \). \end{lemma}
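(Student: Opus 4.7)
The plan is to unpack the Axiom of Existence of Predecessor and specialize it by substituting \(x\) for the bound variable \(z\). Concretely, the defining characterization of \(x\pred\) reads \(\forall z.\ z\in x\pred \iff z\in x \wedge z\neq x\), and this is the only resource I need.

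First, I would instantiate this universally quantified biconditional at \(z := x\), obtaining \(x \in x\pred \iff x \in x \wedge x \neq x\). Second, I would observe that the right conjunct \(x \neq x\) is false by the reflexivity of equality, so the whole right-hand side is false. Third, by the biconditional, the left-hand side is also false, i.e.\ \(x \notin x\pred\), which is what was claimed.

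There is no real obstacle here: the lemma is essentially a one-line consequence of the Axiom of Predecessor, and does not rely on either the lower/upper machinery or on the Axiom of Successor. The only thing worth flagging is that no appeal to extensionality, to the uniqueness clause of the axiom, or to any set-existence assumption beyond Predecessor itself is required; the argument is purely a substitution in the defining schema, followed by the trivial fact \(\neg(x \neq x)\). The companion fact \(x \in x\+\) (Lemma \ref{4}) will have an exactly dual proof, substituting into the Axiom of Successor and using \(x = x\) to discharge the disjunct \(z = x\).
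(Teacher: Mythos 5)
Your proposal is correct and follows exactly the paper's own proof: instantiate the defining biconditional of the predecessor at \(z := x\) and discharge the right-hand side via the falsity of \(x \neq x\). No differences worth noting.
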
  
\begin{proof}
	By the definition of predecessor, \(\forall z.\ z\in x\pred  \iff  z\in x \wedge  z\neq x\), so substituting \(x\) for \(z\), \(x\in x\pred  \iff  x\in x \wedge  x\neq x\).  Thus \(x\notin x\pred \).
\end{proof}

	 \begin{lemma} \label{4} Similarly, \(x\in x\+\). \end{lemma}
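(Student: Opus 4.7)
The plan is to imitate the proof of Lemma \ref{pred self} step-for-step, replacing the Predecessor axiom with the Successor axiom and conjunction with disjunction. Specifically, I would invoke the Axiom of Existence of Successor to obtain \(\forall z.\ z \in x^{++} \iff z \in x \lor z = x\), then instantiate \(z\) with \(x\) to get \(x \in x^{++} \iff x \in x \lor x = x\), and finally observe that the right-hand disjunction is true because \(x = x\) holds by reflexivity of equality.

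The one small divergence from the predecessor proof is that here the decisive clause is the trivially-true \(z = x\) disjunct, rather than the trivially-false \(z \neq x\) conjunct used in Lemma \ref{pred self}; consequently no appeal to irreflexivity of \(\neq\) is needed, and the \(x \in x\) disjunct plays no role whatsoever. This asymmetry is cosmetic, but it is why the present proof is arguably even more trivial than that of Lemma \ref{pred self}, and why the author is content to write only \textit{Similarly}.

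I foresee no real obstacle. The only micro-step requiring any care is confirming that instantiating the universally-quantified \(z\) with \(x\) is legitimate inside the scope of the axiom's unique \(y\); since we have already fixed that \(y\) to be the unique successor \(x^{++}\), this is immediate. The payoff of the lemma is downstream, in Lemma \ref{A}: together with Lemma \ref{1} it forces \(x^{++} \neq x\) for any lower \(x\), which is one of the three ingredients of the Main Result.
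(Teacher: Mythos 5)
Your proof is correct and is exactly the argument the paper intends by its one-word ``Similarly'': instantiate the Successor axiom's biconditional at \(z = x\) and discharge the right-hand side via the true disjunct \(x = x\), mirroring the proof of Lemma \ref{pred self} with the roles of the trivially-false conjunct and trivially-true disjunct swapped. Your observation about that asymmetry and the downstream use in Lemma \ref{A} are both accurate; nothing is missing.
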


\pagebreak[3]
 \section{Lemmata} 
	 \begin{lemma} \label{A} If \(x\) is a lower, then \(x\+ \neq  x\). \end{lemma}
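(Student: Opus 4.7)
The plan is to argue by contradiction using Lemma \ref{4} (which gives $x\in x\+$) together with Lemma \ref{1} (which says a lower is not self-membered). Suppose, for contradiction, that $x\+ = x$. Since Lemma \ref{4} asserts $x\in x\+$, the hypothetical identification $x\+ = x$ would force $x\in x$. But by Lemma \ref{1}, no lower is a member of itself, contradicting the hypothesis that $x$ is a lower.

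A possible alternative route, worth mentioning only to discard it, would appeal to Remark \ref{degeneracy}: something is coextensive with its successor iff it is self-membered. Combined with Lemma \ref{1}, this gives the result just as cleanly, but it routes through ``coextensive'' rather than strict equality. Since the Axiom of Successor already asserts uniqueness, coextensiveness and equality coincide here, so either route works; I would prefer the direct contradiction using Lemmas \ref{1} and \ref{4} since those are the first tools introduced and the argument takes only two lines.

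There is essentially no obstacle: the lemma is a pure bookkeeping consequence of the lemmas immediately preceding it, and no case analysis or choice of witness is required. The only thing to be careful about is not to conflate $x\+ = x$ with ``$x\+$ and $x$ are coextensive''; but as noted this distinction evaporates under the uniqueness clause of the Successor axiom, so the argument is safe either way.
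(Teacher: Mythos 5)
Your proof is correct and follows essentially the same route as the paper: assume \(x\+ = x\), invoke Lemma \ref{4} to get \(x\in x\), and contradict the fact that a lower is not self-membered. The paper's version is just as short and cites the definition of lower directly where you cite Lemma \ref{1}, which is an immaterial difference.
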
  \nopagebreak[4]

\begin{proof}
			 Assume not.  So \(x\+ = x\), and \(x\in x\+\) by Lemma \ref{4}.  So \(x\in x\), and \(x\) is not a lower.  
\end{proof}

	 \begin{lemma} \label{B} If \(x\) is a lower, then \(x\+\) is a lower. \end{lemma}   

\begin{proof}
			 Let \(z\) be a member of \(x\+\).  So \(z\in x \lor  z=x\).  But \(x\) is a lower, so in the first case, \(z\) is non-self-membered.  In the second case, since \(x\) is a lower, it is non-self-membered by Lemma \ref{1}.  
\end{proof}

	 \begin{lemma} \label{C} If \(x\) is an upper, then \(x\pred \neq x\). \end{lemma}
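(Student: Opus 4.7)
The plan is to exhibit a concrete witness showing that $x$ and $x^{--}$ have different members, and then invoke the trivial observation (noted by the author in lieu of an Extensionality axiom) that two sets with a distinguishing member are distinct. The natural witness is $x$ itself: I will show $x \in x$ but $x \notin x^{--}$.

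First I would invoke Lemma \ref{2}, which gives $x \in x$ directly from the assumption that $x$ is an upper. Next I would invoke Lemma \ref{pred self}, which gives $x \notin x^{--}$ unconditionally (it holds for any object). Putting these two facts together, $x$ is a member of $x$ but not of $x^{--}$, so by the distinctness observation $x \neq x^{--}$, which is the desired conclusion.

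There is essentially no obstacle: the entire content of the lemma is packaged in the two prior lemmas, and the proof is a one-line combination. The only thing to be careful about is direction — one might be tempted to look for an element of $x^{--}$ not in $x$, but by the definition of predecessor $x^{--} \subseteq x$, so the witness must go the other way, namely $x \in x \setminus x^{--}$. The upper hypothesis is used only to force $x$ to lie in $x$ (via Lemma \ref{2}); the fact that $x \notin x^{--}$ is free.
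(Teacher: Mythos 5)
Your proof is correct and is essentially identical to the paper's: both combine Lemma \ref{pred self} ($x\notin x\pred$) with Lemma \ref{2} ($x\in x$ since $x$ is an upper) and conclude distinctness from the existence of a distinguishing member.
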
  

\begin{proof}
			 By Lemma \ref{pred self}, \(x\notin x\pred \).  But \(x\in x\) by Lemma \ref{2}, so \(x\) and \(x\pred \) are distinct.  
\end{proof}

	 \begin{lemma} \label{D} If \(x\) is an upper, then \(x\pred \) is an upper. \end{lemma}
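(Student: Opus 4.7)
The plan is to verify the definition of upper directly: pick an arbitrary non-self-membered $z$ and show $z \in x\pred$. By the definition of predecessor, membership in $x\pred$ amounts to two conjuncts, namely $z \in x$ and $z \neq x$, so the proof splits cleanly into establishing each conjunct.

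First, I would get $z \in x$ immediately from the hypothesis that $x$ is an upper, applied to the assumed $z \notin z$. The real content is the second conjunct $z \neq x$, and this is where Lemma \ref{2} is essential: since $x$ is an upper, Lemma \ref{2} gives $x \in x$, whereas $z \notin z$ by assumption, so $x$ and $z$ differ in a membership fact about themselves and are therefore distinct (using only the trivial observation about distinctness noted in place of an explicit Axiom of Extensionality). Combining both conjuncts via the Axiom of Predecessor yields $z \in x\pred$, as required.

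There is no real obstacle here; the argument is a two-line chase through the definitions, with Lemma \ref{2} doing the only nontrivial work by forcing $x$ to be self-membered and thereby distinguishing it from any non-self-membered $z$. The only mild subtlety is remembering that $z \neq x$ is needed (and not just $z \in x$), which is exactly the condition that distinguishes $x\pred$ from $x$ and which is secured precisely because uppers are self-membered while the candidate members of $x\pred$ under consideration are not.
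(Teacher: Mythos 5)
Your proof is correct and follows exactly the paper's own argument: take an arbitrary non-self-membered \(z\), get \(z\in x\) from the upper hypothesis, and get \(z\neq x\) by contrasting \(x\in x\) (Lemma \ref{2}) with \(z\notin z\). Nothing to add.
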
  

\begin{proof}
			 Let \(z\) be non-self-membered; show that it is in \(x\pred \).  By hypothesis \(z\) is in \(x\).  By the definition of predecessor, \(z\in x\pred  \iff  z\in x \wedge  z\neq x\); so \(z\in x\pred  \iff  z\neq x\).  But \(x\) is an upper, hence \(x\in x\) by Lemma \ref{2}.  So \(z\), which is non-self-membered, is distinct from \(x\); thus \(z\in x\pred \).  
\end{proof}

	 \begin{lemma} \label{E} If \(x\) is an upper, then \(x\pred \in x\). \end{lemma}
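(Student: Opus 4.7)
The plan is to leverage Lemma \ref{D} and Lemma \ref{2} together with the fact that the predecessor is a subset. Given an upper $x$, Lemma \ref{D} tells us that $x\pred$ is itself an upper, so Lemma \ref{2} applies to $x\pred$ and yields $x\pred \in x\pred$. To promote this to the desired conclusion $x\pred \in x$, I would invoke the Axiom of Predecessor, which guarantees $\forall z.\ z\in x\pred \iff z\in x \wedge z\neq x$; in particular every member of $x\pred$ is a member of $x$, i.e. $x\pred$ is a subset of $x$.

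The two steps then combine directly: since $x\pred \in x\pred$ and $x\pred \subseteq x$, we instantiate the subset inclusion at the element $x\pred$ itself to deduce $x\pred \in x$, which is what was to be shown.

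There is no real obstacle here — the proof is a two-line chaining of already-established results. The only thing to be slightly careful about is not to misread the statement as requiring self-membership of $x\pred$ to come from the upper property of $x$ (which would fail, since Lemma \ref{pred self} gives $x\pred \notin x\pred$ only by the direct definition, but here we use the stronger fact from Lemma \ref{D} that $x\pred$ is an upper, hence self-membered by Lemma \ref{2}). This is consistent with the remark in footnote \ref{footnote_Forster} that for uppers the implication $x\in x \Rightarrow x\pred \in x\pred$ does hold, and in fact is precisely what drives this lemma.
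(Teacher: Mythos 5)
Your proof is correct and matches the paper's own argument exactly: Lemma \ref{D} plus Lemma \ref{2} give \(x\pred \in x\pred\), and the definition of predecessor makes \(x\pred\) a subset of \(x\), so instantiating the inclusion at \(x\pred\) itself yields \(x\pred \in x\). (One small slip in your closing aside: Lemma \ref{pred self} asserts \(x \notin x\pred\), not \(x\pred \notin x\pred\) --- but this does not affect the argument.)
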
  

\begin{proof}
			 By Lemma \ref{D}, \(x\pred \) is an upper, so by Lemma \ref{2}, \(x\pred \in x\pred \).\footnote{Thomas Forster has queried in personal communication whether there are cases, except for pathologies like \(q = \{q\}\), where \(x\in x\) but \(x\pred \notin x\pred \).  I sketch in the Appendix a modification of the construction in \cite{Sheridan 2016} with such a set, satisfying \(N = U \setminus  \{N\pred \}\), where \(U\) is the \MyIndex{Universal Set}.\label{footnote_Forster}}
		 By the definition of predecessor, \(x\pred \) is a subset of \(x\), so \(x\pred \in x\).
\end{proof}

 	Thus, informally, starting with any lower (for instance the \MyIndex{empty set}), there is a distinct lower containing it, and so on.
	Starting with any upper (for instance the \MyIndex{Universal Set}), there is a distinct upper contained in it, and so on.\footnote{Note that the minimal axioms of \MyIndex{Incomprehensive Set Theory} do not provide enough machinery to formalize the above “and so on” with sets, so they remain intuitive indefinitely-extensible processes, rather than what might properly be called infinite ascending or descending chains.  In particular, the axioms do not address taking limits, though note that hypothetically, if the union of a collection of lowers exists, or the intersection of a collection of uppers, then it is also a lower or upper respectively.  \cite{Church 1974a} p.\ 304 notes that it is an open question whether there is a plausible theory with a set with an infinite descending chain but without a membership cycle.}
	More formally:

\begin{theorem} \label{Main Result}
Main Result 
\begin{enumerate}
  \item No lower link intersects any upper link.
  \item 	For any lower \(x\), \(x\) and \(x\+\) constitute a lower ascending link.
  \nopagebreak[4]
  \item For any upper \(x\), \(x\) and \(x\pred \) constitute an upper descending link.
\end{enumerate}
 \end{theorem}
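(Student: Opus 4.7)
The theorem is essentially a packaging of the lemmas established in the preceding two sections, so my plan is simply to unfold the definitions of a $\phi$ ascending/descending link and cite the relevant lemmas in sequence; no genuinely new reasoning is needed.

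For part (1), I would first pin down what ``intersects'' means for links: the natural reading is that the two pairs share an endpoint. By the definition of a $\phi$ link, a lower link (of either orientation) has both endpoints lower, and an upper link has both endpoints upper. If a lower link and an upper link shared an endpoint, that endpoint would be simultaneously a lower and an upper, contradicting Corollary \ref{3}.

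For part (2), fix a lower $x$ and verify the four conjuncts in the definition of a lower ascending link at $(x, x\+)$: $x \in x\+$ is Lemma \ref{4}; $x \neq x\+$ is Lemma \ref{A}; $x$ is a lower by hypothesis; $x\+$ is a lower by Lemma \ref{B}. Part (3) is the exact mirror image for an upper $x$ and the pair $(x, x\pred)$: the element-of clause is $x\pred \in x$ (Lemma \ref{E}), distinctness $x \neq x\pred$ is Lemma \ref{C}, $x$ is upper by hypothesis, and $x\pred$ being upper is Lemma \ref{D}.

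There is no real obstacle, since the earlier lemmas do all the work. The only subtlety to watch is the orientation asymmetry: a descending link $(x,y)$ requires $y \in x$ rather than $x \in y$, so when instantiating with $(x, x\pred)$ one must check $x\pred \in x$ rather than $x \in x\pred$ --- the latter being in fact false by Lemma \ref{pred self}.
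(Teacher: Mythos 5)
Your proposal is correct and matches the paper's proof exactly: part (1) reduces to Corollary \ref{3}, part (2) cites Lemmas \ref{4}, \ref{A}, and \ref{B}, and part (3) cites Lemmas \ref{E}, \ref{C}, and \ref{D}. Your explicit unfolding of the link definitions and the remark about the orientation of the descending link are slightly more detailed than the paper's one-line citations, but the argument is the same.
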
  
	 
\begin{proof}
In three parts.
\begin{enumerate}
   \item Corollary \ref{3} states that nothing is both an upper and a lower. 
   \item By Lemma \ref{A}, a lower’s successor is distinct, by Lemma \ref{B} is also a lower, and by Lemma \ref{4} contains it.
   \item By Lemma \ref{C}, an upper’s predecessor is distinct, by Lemma \ref{D} is also an upper, and by Lemma \ref{E} is contained by it.
\end{enumerate}
\end{proof}

 \section{The Russell Paradox Restated}
	 The Russell Paradox, and hence the unrestricted Axiom of Comprehension, imply the claim that one such ascending process, and one such descending process, both \textit{stop,} and at the same set, which is both an upper and a lower.  Stated formally, arranged to display the contrast with the preceding theorem:
	 
	 \begin{theorem} Russell Paradox Restated \label{Russell Paradox Restated} \\
	 Let \(R\) be a strictly Russellian set.
\begin{enumerate}
  \item \(R\) is both an upper and a lower.
  \item 	\(R\+ = R\) 
  \nopagebreak[4]
  \item \(R\pred  = R\)
\end{enumerate}
	 
	  \end{theorem}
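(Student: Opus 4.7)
The plan is to work through the three parts essentially as corollaries of results already proved in the excerpt, since the content of the theorem is little more than a repackaging of earlier material into a form that highlights the contrast with the Main Result.

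For part 1, I would simply unpack the definition of strictly Russellian. The biconditional $\forall z.\ z \in R \iff z \notin z$ splits into two implications: the forward direction gives $\forall z \in R.\ z \notin z$, which is exactly the definition of $\textbf{lower}$; the backward direction gives $\forall z.\ z \notin z \implies z \in R$, which is exactly the definition of $\textbf{upper}$. So $R$ is both an upper and a lower.

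For parts 2 and 3, I would invoke Corollary \ref{stoppage} directly. Since $R$ is an upper (by part 1), stoppage tells us $R$ is coextensive with its successor, and since the Axiom of Successor asserts uniqueness ($\exists!y$), coextensivity collapses to equality, giving $R^+ = R$. Symmetrically, since $R$ is a lower, stoppage gives coextensivity of $R$ with $R^-$, and uniqueness in the Axiom of Predecessor gives $R^- = R$. One could alternatively go through \ref{degeneracy} directly, noting that $R \in R$ follows from $R$ being an upper (Lemma \ref{2}) while $R \notin R$ follows from $R$ being a lower (Lemma \ref{1}); the two together already announce the contradiction that underlies the Russell Paradox, and each separately triggers the appropriate half of \ref{degeneracy}.

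There is no real obstacle here, and indeed the point of the theorem is that there should not be one: the work of the paper has already been done in the Trivial Lemmata and in the Main Result, and this restatement exists only to display how the hypothetical existence of $R$ would amount to the simultaneous stopping of the ascending and descending processes at a single set. The only thing worth flagging is that, strictly speaking, parts 2 and 3 presuppose the mild point that ``coextensive'' sets are equal, which here is immediate from the uniqueness clauses in the Axioms of Successor and Predecessor rather than from any appeal to an Axiom of Extensionality (which the author has deliberately avoided assuming).
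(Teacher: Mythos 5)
Your proposal matches the paper's proof essentially verbatim: part 1 by unpacking the definition of strictly Russellian, and parts 2 and 3 by Corollary \ref{stoppage} applied to $R$ qua upper and qua lower respectively. Your closing remark---that the passage from coextensivity to equality is licensed by the uniqueness clauses of the Successor and Predecessor axioms rather than by Extensionality---is a point the paper leaves implicit, and is a correct and worthwhile refinement.
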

	 
	\begin{proof}
\begin{enumerate}
  \item By the definition of strictly Russellian.
  \item By \ref{stoppage}, since \(R\) is an upper.
  \item Also by \ref{stoppage}, since \(R\) is a lower.
\end{enumerate}
\end{proof}

   \section{Philosophical Interpretations}\label{Philosophical Interpretations}

	 Leaving the realm of proof, and entering that of intuition, I claim that the above restatement of the \MyIndex{Russell Paradox} shows that the \MyIndex{Axiom of Comprehension} does not count as a worthwhile intuition deserving of rescue or repair.\footnote{See \textit{contra,} for example, the quotations below from Gödel in §\ref{The Platonist Conception} “The Platonist Conception” and Church in §\ref{Ill-Founded Iterative Conception} “The Ill-Founded Iterative Conception.”}  
	 On close examination, the conflation of predicates and sets (discussed below; “Begriffsumfange oder Klassen” in Frege’s terminology, “\MyIndex{extensions of concepts} or \MyIndex{classes}”) is not essentially correct, with minor blurring, as Gödel would like and Frege feels forced to assume.\footnote{See the quotation below in §\ref{Restrictive Repair Attempts} “Restrictive Repair Attempts.”}  A better visual analogy will be an optical illusion, where the initial impression turns out to be without foundation, and must be adjusted in the light of reality.  
	 
	 As discussed in §\ref{Shared Intuitions} “Shared Intuitions”, a responsible intuition of a generalization must allow intuiting one of its instances, in this case the existence of the \MyIndex{Russell Set}.  
	 This intuition depends on one’s basic intuition about the nature of set existence, which seems to come in three relevant varieties, which I will call the iterative conception, the lasso, and the Platonist.\footnote{I do not claim that my analysis is relevant to some very different conceptions: positive and related theories such as those of Malitz and Barwise \& Moss; ill-founded set theories without a universal set, such as Forti \& Honsell’s and Aczel’s; and the type theory of  \MyIndex{Whitehead} \& \MyIndex{Russell}’s \MyIndex{\textit{Principia Mathematica}}.}  For each of these conceptions, a supposed intuition of the existence of the Russell Set does not withstand scrutiny: the supposed Comprehension intuition does not cohere with the conception.  A fourth category, which I discuss first, is restrictive repair attempts without a conspicuous intuition behind them, for instance as in Frege’s hurried addition (below) to his \textit{Grundgesetze} in reaction to Russell’s letter, or Quine’s New Foundations \cite{Quine 1937}.
	 
 \subsection{Restrictive Repair Attempts} \label{Restrictive Repair Attempts}
 To Frege’s credit, his reaction to Russell’s letter describing his paradox was acceptance rather than denial; he rushed into print, however, a hurried restriction (without a philosophical justification) to his Axiom V  (which in modern terminology is a \MyIndex{higher-order} generalization of the unrestricted \MyIndex{Axiom of Comprehension}) \cite{Frege 1893} “Afterword.”  This attempt is now generally called \MyIndex{Frege’s Way Out}.  

\begin{quotation}

Wie sollen wir uns hierzu stellen?  Sollen wir annehmen, das Gesetz vom ausgeschlossenen Dritten gelte von den Klassen nicht?  Oder sollen wir annehmen, es gebe Fälle, wo einem unanfechtbaren Begriffe keine Klasse entspreche, die sein Umfang wäre?... ... 

Es bleibt also wohl nichts anderes \"ubrig, als die \textbf{Begriffsumfange oder Klassen} als Gegenstände 
im eigentlichen und vollen Sinne 
dieses Wortes anzuerkennen, zugleich
aber einzuräumen, dass die bisherige
Auffassung der Worte "Umfang eines 
Begriffes" einer Berichtigung bedarf.

\textit{What attitude should we take to this?  Should we assume the law of excluded middle fails for classes?  Or should we assume that there are cases where to an incontestable concept no class corresponds as its extension?... }

\textit{So presumably nothing remains but to recognise \textbf{extensions of concepts or classes} as objects in the full and proper sense of the word, but to concede at the same time that the erstwhile understanding of the words “extension of a concept” requires correction. } \cite{Frege 1893} “Afterword,” pp.\ II.254–6, Ebert \& Rossberg translation.  Emphasis added.
\end{quotation}

Approximately four years later he apparently realized that his restricted axiom was inadequate for crucial proofs (\cite{Dummett 1991} p.\ 5).  He abandoned extensions of concepts completely and spent much of the rest of his career in apparent despair; he later started an attempt to found mathematics without using classes \cite{Frege 1924}.  In 1955, Quine showed that the restricted axiom proved that there was at most one object \cite{Quine 1955}.\footnote{Compare also Quine’s remarks on p.\ 156 with the definition of \MyIndex{Lower Russellian Set} and Lemma \ref{B}:  “we might require of a satisfactory reinterpretation that \(\hat{x}(\phi x)\) [the extension of the concept \(\phi \)] continue to contain only objects \(x\) such that \(\phi x\)....   \(\hat{x}(\phi x)\) must not be a subclass of any further class which contains only objects \(x\) such that \(\phi x\)...   It may happen, for some open sentences, that every class whose members all fulfill the sentence is a subclass of a further class whose members all fulfill the sentence, so that there is no final class of the kind.”  See also \cite{Dummett 1991} p.\ 316.}  In 2009 Cook derived a contradiction based on the \MyIndex{Liar Paradox} from it \cite{Cook 2009}.

\smallskip
	 I claim that Frege indulged uncharacteristically here in what I would call the Fallacy of the Only Solution, and a flawed attempted correction (without a philosophical conception), to avoid facing squarely an instance where “to an incontestable concept no class corresponds as its extension.”  While modern set theories coherently treat “classes as objects in the full and proper sense of the word,” we now recognise that “extensions of concepts” and “classes” are not synonyms:  There exist both indefinable sets and extensionless concepts.

\bigskip
In 1937, Quine published his \MyIndex{New Foundations} set theory (\cite{Quine 1937}), which has a syntactically-restricted \MyIndex{Axiom of Comprehension}, also without a philosophical justification.  
In 1940 Quine presented a variant of this, \MyIndex{ML}, which Rosser showed suffered from the \MyIndex{Burali-Forti paradox} (revised edition of \cite{Quine 1951}, p.\ ix).

\bigskip  

From a Formalist perspective, inconsistencies simply remove all interest from Frege’s original system and his Way Out, and from Quine’s ML. 
New Foundations has fared much better for a Formalist; no contradiction has been proved in it, and considerable work has been done towards a relative consistency proof with \MyIndex{Zermelo-Frænkel set theory}.  It has one of Frege’s key desiderata, equinumerosity equivalence classes as sets, though its account of cardinality is counter-intuitive:  The cardinality of the universal set is different from that of the set of all singletons; although both are sets, the intuitively obvious mapping \textendash \  the \MyIndex{singleton function} \textendash \  cannot be a set (\cite{Holmes 1998}, p.\ 110, 131).  New Foundations also disproves the Axiom of Choice (\cite{Specker 1953}), though its variant NFU, which allows urelements, does not, and can be shown consistent in \MyIndex{Zermelo Set Theory} (\cite{Forster 1992} p.\ 67).  

\medskip
For Platonists, among others, \MyIndex{New Foundations} is not a philosophically satisfying solution to the Russell Paradox.  As Sir Michael Dummett wrote (\cite{Dummett 1991} p.\ 230), reflecting both his \MyIndex{Intuitionist} perspective and Frege’s \MyIndex{Logicism}:

\begin{quotation}
	Whatever mathematicians profess, mathematical theories conceived in a wholly formalist spirit are rare.  One such is Quine’s \MyIndex{New Foundations} system of set theory, devised with no model in mind, but on the basis of a hunch that a purely formal restriction on the comprehension axiom would block all contradictions.  The result is not a mathematical theory, but a formal system capable of serving as an \textit{object} of mathematical investigation:  without some conception of what we are talking about, we do not have a theory, because we do not have a subject-matter.
\end{quotation}

From a Platonist perspective such as Gödel’s (below), such concerns (along, perhaps, with mathematical elegance) are primary, and consistency (in sharp contradistinction to Formalism) while a necessary condition, is not alone sufficient.  Thus New Foundations would resemble \MyIndex{Frege’s Way Out} and \MyIndex{ML} in their early years, before their undesirable consequences (including, but not limited to, inconsistency) were discovered.

\subsection{The Iterative Conception}
	 The most influential variety of basic intuition is the iterative conception of set (e.g., \cite{Boolos 1971}); according to this intuition, sets are constructed in a manner understood via a temporal metaphor.  There are two subvarieties:  orthodox well-founded theories such as Zermelo-Fraenkel, and an ill-founded theory by Alonzo Church.
	 
	 For both subvarieties, the intuition implies the Axiom of the Existence of Successor quite directly, and requires the Axiom of the Existence of Predecessor slightly less directly:  Whatever metaphorical iteration led to a given set would also lead to its predecessor.  

\subsubsection{Well-Founded Iterative Conception}
	 Orthodox set theories such as Zermelo-Fraenkel\footnote{See \cite{Holmes 2017} for an argument that, since ZF allows sets to be constructed at an earlier level via quantification over sets constructed at a later level, the theory justified by the iterative conception is actually Zermelo Set Theory with \(\Sigma _{2}\) replacement.} satisfy the Axiom of Foundation and hence have no self-membered sets.  Thus all sets are lowers, and there are no uppers.  
	 
	 \smallskip
	 Comprehension’s implication of the existence of \MyIndex{Russell Set} then would amount to asserting the familiar \MyIndex{Mirimanoff} Paradox \cite{Mirimanoff 1917} of the set of all well-founded sets.  Having the conceived iteration not proceed even a single step beyond this set of all well-founded sets, would flout the iterative nature of the basic intuition.
	 
\subsubsection{Ill-Founded Iterative Conception}\label{Ill-Founded Iterative Conception}
Church’s published Set Theory with a Universal Set (\cite{Church 1974a}, p.\ 303) has both ill-founded sets and a 
 well-founded part which resembles that of the base model of Zermelo-Fraenkel with the \MyIndex{Axiom of Choice} used in his relative consistency proof. 
The axiom schemas of \textit{Aussonderung} (Separation), Replacement, and a sequence generalizing \MyIndex{Frege-Russell cardinals} are restricted to well-founded sets; so is the Axiom of Power Set, though this restriction (as well as some of the axioms and schemas) is removed in the related theory of \cite{Mitchell 1976}.
Church presents no intuition behind his theory here or in his unpublished notes and later related theories (\cite{Sheridan 2016} pp.\ 9–10).
Instead he suggests an historical heuristic:
 \begin{quotation}
  principle of specialized comprehension...  we seek axioms which are special cases of the general comprehension axiom and which promise to maintain consistency while at the same time being adequate for a large variety of mathematical purposes....  may be roughly described by saying that we accept all of naive set theory except the comprehension axiom, and then assume as many special cases of the comprehension axiom as we dare  (p.\ 297).
\end{quotation}

His published article concludes by suggesting “the search for suitable axioms to be added to the basic axioms,”
with “one source of axioms to be studied for their consistency with the basic axioms” \cite{Hailperin 1944}’s finite axiomatization of \MyIndex{New Foundations} (pp.\ 307–8).
	\smallskip
	
	Assume that we intuit the existence of the \MyIndex{Universal Set} (or, indeed, any upper) while holding some sort of iterative conception of set existence.
	If we also intuit Comprehension, and hence the existence of \MyIndex{Russell Set}, 
	then one more iteration would yield its successor, which by the ascending link part of Theorem \ref{Main Result} is distinct.  The iterative process would also yield the \MyIndex{Russell Set}’s predecessor, which is likewise distinct.
	
	The iterative intuition requires continuing both these processes.  Calling a halt would be completely arbitrary; halting the successor iteration is analogous to claiming that the successor function on the natural numbers stops at some particular integer.  The predecessor iteration generates something new, given any upper: an indefinitely-extensible process of descending links.
	
	The analogous descending image I suggest for this is a non-standard model of Peano Arithmetic, visualizing the descending links as decreasing non-standard (i.e., externally non-finite) positive integers.  Visualize the ascending links as before: increasing non-negative standard (i.e., externally finite) integers.  Intuiting the existence of the Russell Set then is analogous to claiming that both processes stop, and at the same place.  This stopping point is then analogous to something which is both the last standard integer and the first non-standard integer, and which has neither a successor nor a predecessor.  Thus the consequences of the combined intuition do not cohere with the basic intuition.

	 \subsection{The Lasso and the Wand}
	 I address here the intuition presented in \cite{Forster 2008} on ill-founded set theories; he attributes the intuition of the lasso to \cite{Boolos 1971} on well-founded sets, who in turn credits Kripke for the original insight.  Boolos’ use of the lasso intuition, however, is for his iterative conception of well-founded sets, which is discussed above.  Forster’s employment of the lasso plus a wand for ill-founded sets is different; for generality and simplicity, I will use only the basic part of the intuition here. 
	 
	 This intuition also relies on a temporal metaphor, where some preexisting sets are selected by a lasso, and the wand creates two new sets: one containing the lassoed sets, and the other containing those preexisting sets not lassoed.  Note that the intuition must also determine whether each new set contains itself.  (Forster’s further intuitions supply these answers, and determine whether the new sets will contain each other, as well as later sets when this process is iterated; but that is not needed for our purposes.)  
	 
	 \smallskip
	 The crucial observation is simply that, whichever decision is made as to self-membership, and no matter what the lasso has collected, the decision avoids the Russell Paradox:  If a new set is self-membered, it is not a lower by Lemma \ref{1}; if it is not self-membered, it is not an upper by Lemma \ref{2}.  Either way, the intuition that some such set is strictly Russellian amounts to a refusal to make the required decision about membership in the newly-created set.

\bigskip	 
	\cite{Button 2023} contains a more general intuition, with wands satisfying various requirements, but not lassos, and with a modified temporal metaphor:  Sets are viewed, not as constructed at various levels, but merely indicated there.
	
	\smallskip
	The generality shifts the avoidance of paradox to requirements on particular wands.  The wand for equinumerosity (and for Church’s generalization), for example, satisfies these requirements (§11).  The obvious path to the \MyIndex{Russell Set}, the equivalence relation of being equally self-membered (i.e., \(x \in  x \iff  y \in  y\)) is blocked by the requirements on a wand/set theory in his definition 5.1.
	 
\subsection{The Platonist Conception}\label{The Platonist Conception}
Mathematical Platonism is now less popular than the iterative conception, but it is worth recalling that it was part of the outlook of two of the greatest logicians in history, and it may still be relevant to understanding the \MyIndex{Russell Paradox}.
 \begin{quotation}
Die Zeit ist nur ein psychologisches Erforderniss zum Zählen, hat aber mit dem Begriffe der Zahl nichts zu thun.

\textit{Time is only a psychological necessity for numbering, it has nothing to do with the concept of number.}  

\cite{Frege 1884} §40 p.\ 53, J.L. Austin’s translation.
\end{quotation}

	 \begin{quotation}
 ... it seems that the vicious circle principle in its first form [“definable only in terms of,” preceding page] applies only if the entities involved are constructed by ourselves.... 

Classes and concepts may, however, also be conceived as real objects, ...  existing independently of our definitions and constructions.

It seems to me that the assumption of such objects is quite as legitimate as the assumption of physical bodies and there is quite as much reason to believe in their existence.

 \medskip
 
 ... our logical intuitions would then remain correct up to certain minor corrections, i.e., they could then be considered to give an essentially correct, only somewhat “blurred,” picture of the real state of affairs. 
 
 \cite{Gödel 1944}, pp.\ 456, 466–7.
\end{quotation}

Frege’s original conception retains interest for Platonists even after the paradox.  It is of course a profound mystery how it achieved so much while its most significant axiom led to such a short contradiction.  But that achievement is enormous:  Frege provided an absolutely rigorous foundation for arithmetic, and was well on his way to providing a foundation for analysis.  Even \MyIndex{Whitehead} \& \MyIndex{Russell}’s \MyIndex{\textit{Principia Mathematica}} (according to \cite{Gödel 1944} p.\ 448) was substantially less rigorous.  Still less rigorous are typical logic articles today, not to mention proofs (more properly proof sketches) where there is serious disagreement over whether they actually constitute a proof.  Compare Frege’s boast:  
 \begin{quotation}
  Wenn etwa jemand etwas fehlerhaft finden sollte, muss er genau angeben können, wo der Fehler seiner Meinung nach steckt: in den Grundgesetzen, in den Definitionen, in den Regeln oder ihrer Anwendung an einer bestimmten Stelle. 
\end{quotation}
\begin{quotation}
   \textit{If anyone should believe that there is some fault, then he must be able to state precisely where, in his view, the error lies: with the basic laws, with the definitions, or with the rules or a specific application of them.}  \cite{Frege 1893}, p.\ VII, Ebert \& Rossberg translation
\end{quotation}

\medskip	 
	 By the Platonist conception, sets are not constructed in time, but exist independently of our constructions:  all we can do is point to them.  (“\MyIndex{Platonist}” is not ideal terminology; “\MyIndex{Pythagorean}” or “\MyIndex{atemporal}” might do as well.)  A set existence axiom merely points out a preexisting set with the required property; it no more creates a new set than we can create a new Boolean value.  
	 
	 \smallskip
	 Presumably these sets are closed under some interesting operations, but for this analysis, I assume only the usual trivial successor and predecessor axioms of \MyIndex{Incomprehensive Set Theory}.	 
	 In the case of the Russell Set, Theorem \ref{Main Result} makes it clear that no set we point to will be strictly Russellian:  Either the ascending or descending link part of the theorem will demonstrate the inadequacy of any plausible candidate, by pointing to a less bad approximation, which in turn is not strictly Russellian either.  
	 
	 By the conception of sets as existing independently of our constructions, the claim that there is a Russell Set is not, as Gödel would have it, an essentially correct intuition requiring minor correction.  
	 It is simply wishful thinking, analogous to wanting the phrase “sixth Platonic solid” to have a referent, while not actually intuiting which shape this might be.

\subsection{Conclusion}

I claim that Frege was right to abandon sets as extensions of concepts; a sufficiently deep and Platonist contemplation of this distinction, and of his rejection (quoted above) of the temporal metaphor, might have saved him from paradox and despair.  I also argue that this would not require abandoning sets completely, not even equivalence classes under equinumerosity as a formalization of cardinality.  
Rigorous definition by abstraction as the origin of cardinal numbers was the key to Frege’s philosophical anti-formalist doctrine that, as [Soames 2019] p.\ 97 summarizes, “\textit{Numbers are whatever they have to be in order to explain our knowledge of them.}  To discover what they are, we must give definitions of numbers that allow us to deduce what we pre-theoretically know,” 
which is a partial answer to the problem, still with no complete solution, of our knowledge of mathematical truths.  

It is also an alternative to arbitrariness in the implementation of cardinal numbers as initial Zermelo or von Neumann ordinals (\cite{Benacerraf 1965}, p.\ 281), and to the inconsistency of a “univocal role” for Frege cardinals (p.\ 284).\footnote{Frege’s 
definition of numbers in terms of one-one correspondence also pre-emptively rebuts Benacerraf’s claim (p.\ 290) that natural numbers have meaning only in the context of the sequence of all numerals.  A native speaker of a language without an indefinitely extensible numeric notation (\cite{Comrie 2022} p.\ 3), 
but who grasped the notions of bijection and pairs, might also be able to grasp Frege’s notion of the number two without first having to accept a sequence of all numbers.  

Recall Frege’s note (\cite{Frege 1884} §73 p.\ 85) that the etymology is deceptive here:  “\MyIndex{equinumerosity}” morphologically presupposes numerals, and Frege’s term “\MyIndex{gleichzahl}ig” morphologically presupposes numbers.  Austin’s translation “equal” avoids this difficulty but has its own problems.  “eindeutigen Zuordnung” (§62 p.\ 74) is the base concept, which Austin translates as “one-one correlation.”  The morphology may be helpful here:  in English, “one-point(-adjective)(-verb) to-arrange(-noun).”  Even the use of “one” increases the ontology unnecessarily; “each” would do as well.}  
It is an incomplete explanation of the otherwise unreasonable effectiveness of mathematics, since it does not cover higher mathematics, especially fields invented with no thought of practical applications which later acquired them.
\medskip

Considerable technical challenges remain, however, and even before Frege learned of the paradox, avoiding arbitrariness in the definition of real numbers (not to mention that of Julius \MyIndex{Cæsar}, \cite{Dummett 1991} pp.\ 157, 165, 280–1) had no obvious solution.\footnote{Given 
Frege’s suspicion of the unclarity of natural languages, the notorious Julius \MyIndex{Cæsar} problem would not have been relevant to his formalization program until it had reached formal definitions of names of biological entities.  While much of recent philosophy of mathematics and its notation is concerned with human language, this has not been a universal preoccupation.  (It would not be straightforward to read Frege’s two-dimensional concept writing aloud in any human language.  It is also worth recalling that Frege did not confine himself to a \MyIndex{first-order language}.)  

The first writing had no necessary connection with any particular natural language.  (Its inventors probably spoke primarily Sumerian, but the nature of the writing makes certainty impossible  \cite{Michalowski 2004}, pp.\ 20 \& 25.)  Given the current difficulties caused for artificial intelligences by reliance on predominantly one natural language (English), in the long run, the connection between human natural languages and logical writing may turn out to have been temporary.}

\cite{Church 1974a} has an axiom of the existence of a generalization of equinumerosity equivalence classes (including “relation number[s] in the sense of \textit{\MyIndex{Principia Mathematica}},” p.\ 303), though the axiom is restricted to well-founded sets, and considerably more work would need to be done to see whether its cardinalities are even more counterintuitive than Quine’s.  
Church apparently abandoned his unpublished attempts (\cite{Sheridan 2016} Appendix §C.2 pp.\ 99–100) at a unification between his set theories with a universal set and Quine’s set theory.  \censor{\cite{Sheridan 2016}} is a largely unsuccessful attempt to extend Church’s published theory while avoiding one of the difficulties of New Foundation’s cardinalities, the non-existence of the \MyIndex{singleton function}.
\cite{Oberschelp 1973} contains a promising approach for model-building and has the \MyIndex{singleton function} as a set, but both its philosophical underpinnings and the formalism of its main proof are far from obvious.  \cite{Burgess 2005} presents a quite different formalization of cardinal numbers, which might have reduced Frege’s despair over his foundational program.
\bigskip

	 That our intuitions are sometimes correct remains the most profound mystery of mathematics.  The failure of some of our intuitions can be considerably less profound, and needs to be accepted rather than resisted.	 In retrospect, it should have been no more surprising that not all predicates correspond to a set, than that not all sets correspond to a definable predicate.
	 Attempting to repair such a broken intuition would be the metaphorical equivalent, not of correcting blurred vision, but of searching for a smaller version of something after it turned out to be an optical illusion.

  \section{Appendix:  A Less Pathological Counterexample to a Self-Membership Query of Forster}
  
  In email about an earlier version of this paper, Thomas Forster raised the question of whether (in my notation): \(x\in x \implies  x\pred  \in  x\pred \).  There are simple pathological counterexamples, such as a \MyIndex{Quine atom} (whose existence seems independent of \MyIndex{New Foundations}) \(q\) such that \(q = \{q\}\).  (Observe that \(q \in  q\),  and \(q\pred  = q \setminus  \{q\} = \{q\} \setminus  \{q\} = \emptyset \), and \(\emptyset  \notin   \emptyset \).)
  I am grateful to an anonymous referee for the observation that, in the context of \MyIndex{New Foundations}, a counterexample may be obtained with a \MyIndex{Rieger-Scott permutation}.

The generalization is provable in \MyIndex{Incomprehensive Set Theory} of any upper, by 4.4 \& 3.2, and trivially true of any lower by 3.1.  It holds in some simple cases in constructions like Church’s and \censor{mine}, e.g., the Universal Set U (where \(U\in U \wedge  U\pred  \in  U\pred \)), and equivalences classes (or simple combinations thereof) such as the set of all singletons, which is not a member of itself.

That is however a limitation of those constructions rather than a general trait, according to the shared intuitions in §\ref{Shared Intuitions} of sets as extensional objects rather than only extensions of concepts.  It is a relic of the goal of this technique, to provide extensions for some broad concepts like equinumerosity equivalence classes; the constructions do not go very far beyond that goal.  By this intuition, whether one object belongs to a given set should be independent, in normal cases, of whether some other object belongs to it.  (A partial formulation of this intuition would be generalizations of the Axioms of Predecessor and Successor, to the existence of \(x \setminus  \{y\}\) and of \(x \cup  \{y\}\), even when \(x \neq  y\).)

A non-pathological counterexample (which nonetheless exposes issues about individuation and identity), \(N\), with \(N = U \setminus  \{N\pred \}\), where \(U\) is the \MyIndex{Universal Set}, may be obtained via a small modification to the construction in \cite{Sheridan 2016}.  (This technique apparently would not work with the similar construction in \cite{Church 1974a}.)

This construction can be repeated, so that there is another such set with \(P = U \setminus  \{P\pred \}\), but this raises the question of whether it would accord with the intuition of sets as extensional objects to have two such sets.
Arguably this intuition requires that there be only one such object, but it is not clear what  general principle would justify this.  \MyIndex{Finsler}’s Axiom of \MyIndex{Strong Extensionality} might suffice, but may present difficulties \cite{Forster 1998}.
Such questions of identity, and axiomatising intuitions about the nature of sets as extensional objects, are promising areas for future work.

I here sketch the construction referenced in footnote \ref{footnote_Forster}, of a set satisfying \(N = U \setminus  \{N\pred \}\); consequently \(N\in N\) but \(N\pred \notin N\pred \).  For non-readers of \cite{Sheridan 2016}, I begin with an informal summary of the construction technique.

\subsection[Brief Summary of  Membership Relation in \cite{Sheridan 2016}]{Brief Summary of  \(\in _3\) in \cite{Sheridan 2016}}

The \MyIndex{base theory} is \MyIndex{RZFU}, a theory similar to Zermelo-Frænkel with a global well-ordering, with the Axiom of Extensionality modified to permit urelements, and a bookkeeping axiom to keep track of them.  (Other differences from conventional ZFC with urelements need not concern us here,\footnote{E.g., the theory does not assume the Axiom of \MyIndex{Foundation}, but the axioms of \MyIndex{Replacement}, \MyIndex{Power Set}, and \MyIndex{Sum Set} are restricted to well-founded sets} \censor{and were in retrospect not worth the effort}.)
The construction below will use a relation similar to the main membership relation, \(\in _3\), in \cite{Sheridan 2016}, which I will call \(\in _4\).  
For the benefit of non-readers of that work, I will omit many minor details and sketch only the relevant parts of the construction.  A somewhat less brief summary is in \cite{Sheridan 2014}.

The \(\in _3\) construction is an interpretation over \censor{my} base theory (whose membership relationship is called \(\in _0\) for clarity), where \(\in _3\) extends \(\in _0\) when an urelement is on the right.
The point to the construction is to define \(\in _3\) in terms of \(\in _0\), and then prove the \Def{interpretation} of each axiom, i.e., its definitional expansion, including expansion of symbols defined in terms of \(\in _3\).  This then provides a purely syntactic\footnote{Despite Church’s use of the word “model,” e.g., page 305 of \cite{Church 1974a}.} proof of the consistency of the theory relative to the base theory:  Any proof of a contradiction from the axioms of interest can be converted into a proof of a contradiction in the base theory.

\subsubsection{The Sequence of Equivalence Relations}
Much of the complexity in both Church’s and  \censor{my} relative consistency proofs came from \censor{our} sequences of equivalence relations, which are not relevant to the current construction, and which I will ignore except for two trivial ones, the first and the last in the sequence.  \censor{My} sequence was from \(0\) to an arbitrary ordinal \(\mu \), which was a generalization of Church’s choice of \(1\) through an arbitrary natural number \(m\).  For greater notational uniformity in that construction, \censor{I} supplemented Church’s relations with two trivial ones, \(\Bumpeq _0\) and \(\Bumpeq _\mu \), which will be the only relations used here: \(\Bumpeq _0\) is the trivial relation true of any two objects, and \(\Bumpeq _\mu \) is equality.  (The other ordinals between \(0\) and \(\mu \), if any, will be elided here with “... ”.) 

\censor{My} construction relied on a form of Global Choice to select a representative from each equivalence class; given the restriction to two trivial relations here, the user may think of \(\mu \) as \(1\), and of the representative for \(x\)’s \(\Bumpeq _\mu \)-equivalence class (denoted “\(\mu \rep(x)\)”) as \(x\) itself.  The representative of the sole \(\Bumpeq _0\)-equivalence class (denoted “\(0\rep(x)\)”, for any \(x\)) may be thought of as \(\emptyset \) for the sake of definiteness.

Sequences of length \(\mu +1\) (which in conformity to Church’s terminology  \censor{I} called \(\mu +1\)-tuples and surround with “\((... )\)” rather than “\(<... >\)”) are used to tag urelements.  These urelements represented new sets, such as the Universal Set, in the sense of \(\in _3\).  (\(\in _3\) coincides with \(\in _0\) when a non-urelement is on the right side.)  The tagging was via a bijective formula \(\Upsilon ''(x)\) (locally abbreviated, for convenience, “\(*\)”, with “\((... )\)” omitted; see \censor{§20.2 “Excess Urelements”} of \cite{Sheridan 2016} for details.)  The \(\mu +1\)-tuple tagging an urelement was called its \textbf{Index}.

A simplifying intermediary \censor{I} added to Church’s definition was what \censor{I} called the “sprig of \(x\) for an urelement \(u\) with tag \(L\)”:  the collection of pairs \(<j, j\rep(x)>\) (for \(j \leq  \mu \)) such that \(j\rep(x)\) was in the \(j\)\textsuperscript{th} element of the tag \(L\) of the given urelement \(u\).

\subsubsection{\(\in _3\)}
The membership relation \(\in _3\) was extended for urelements (in the sense of \(\in _0\)), by stipulating for an urelement \(u\), that \(x \in _3 u\) if the sprig of \(x\) for \(L\) had an odd number of members.  In the case we consider here, considering only ordinals \(0\) and \(\mu \), the only possible odd number is one; i.e., if the sprig of \(x\) for \(L\) contains \(0\rep(x)\) or \(\mu \rep(x)\) but not both.

Informally, in this type of construction, self-membership is mainly signalled only by the zeroth (complement) component, and can otherwise normally be affected only by the last (\(\mu \)) component listing exceptions.  Components signalling Church’s or  \censor{my} equivalence relations don’t seem to provide self-membership; for instance, the Frege-Russell cardinal \(1\) (the set of all singletons), has more than one element.  (This type of interpretation doesn’t provide generalized Frege-Russell cardinals for big sets.) 

Thus, for instance, \(*(\{0\rep(0)\}, ...  \{\})\) will represent the \MyIndex{Universal Set}, U, in the sense of \(\in _3\):  The \(0\)\rep\  of anything is in the zeroth element of the sequence, and the other element(s) of the sequence are empty; thus the sprig of anything for tag \((\{0\rep(0)\}, ...  \{\})\) has one member, and one is odd.  Similarly, \(U\pred \) would be represented by \(*(\{0\rep(0)\}, ...  \{\mu \rep(U)\})\).

\subsection{\(\in _4\)}

I will here briefly sketch the small changes needed for a new membership relation \(\in _4\), which will provide the required counterexample.  The only change from \(\in _3\) in §20.3 of \cite{Sheridan 2016} is a tweak to the bijective formula \(\Upsilon ''\):  

Choose two distinct urelements \(M\) and \(N\).  Consider the following two indexes:

\[ n = (\{0\rep(0)\}, ...  \{\mu \rep(M)\}) \]

\[ m = (\{0\rep(0)\}, ...  \{\mu \rep(M), \mu \rep(N)\}) \]

We tinker with the bijective formula \(\Upsilon ''\) mapping Indexes to urelements.  Define \(\Upsilon '''\) as \(\Upsilon ''\), except that we swap values so that it maps (if it does not already) \(n\) to \(N\) and \(m\) to \(M\), and (to avoid collisions) \(\Upsilon ''\text{-}inverse(N)\) to \(\Upsilon ''(n)\), and \(\Upsilon ''\text{-}inverse(M)\) to \(\Upsilon ''(m)\).  By my customary abuse of notation, I will again recycle the symbol “\(*\)” and locally abbreviate \(\Upsilon '''(x)\) to \(*x\), without parentheses.

The definition of \(\in _4\) is then textually identical to that for \(\in _3\), given the new local definition of \(*\).

\subsection{Properties of M and N}

Thus we have 

\[ N = *(\{0\rep(0)\}, ...  \{\mu \rep(M)\}) \]
\[ M = *(\{0\rep(0)\}, ...  \{\mu \rep(M), \mu \rep(N)\}) \]

\begin{lemma} \(N\) is, in the sense of \(\in _4\), \(U \setminus  \{M\}\) \end{lemma}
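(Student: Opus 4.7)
The plan is to unfold the definition of $\in_4$ applied to the urelement $N$ with its specific tag, and then reduce the odd-cardinality condition on the sprig to a simple equation on $x$. Since the construction has been restricted to the two trivial ordinals $0$ and $\mu$, the sprig of $x$ for $N$ has at most two members, one potentially contributed by each component of the tag, so ``odd'' collapses to ``exactly one,'' which is exactly the sort of bookkeeping the author flagged as the simplifying point of admitting only $\Bumpeq_0$ and $\Bumpeq_\mu$.

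Concretely, I would first write out $x \in_4 N$ as the statement that $\mathrm{sprig}(N,x)$ has odd cardinality, and then inspect its two candidate members, the pair $\langle 0, 0\text{-}rep(x)\rangle$ and the pair $\langle \mu, \mu\text{-}rep(x)\rangle$. For the zeroth slot I would use that $\Bumpeq_0$ is the trivial equivalence, so $0\text{-}rep(x) = 0\text{-}rep(0)$ for every $x$, which means $0\text{-}rep(x) \in \{0\text{-}rep(0)\}$ holds unconditionally and the $j=0$ pair is always in the sprig. For the $\mu$-slot I would use that $\Bumpeq_\mu$ is equality, so $\mu\text{-}rep(x) \in \{\mu\text{-}rep(M)\}$ holds exactly when $x = M$.

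Combining these two observations, the sprig of $x$ for $N$ has exactly one member when $x \neq M$ (only the zeroth pair) and exactly two members when $x = M$ (both pairs). So $x \in_4 N$ iff the sprig has odd size iff $x \neq M$, which is precisely the assertion that $N = U \setminus \{M\}$ in the sense of $\in_4$.

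There is no real obstacle here; the only friction is notational, in carefully threading the definitions of sprig, tag, and the $j\text{-}rep$ functions through the restricted two-ordinal setting. The lemma is essentially a direct verification, and its role is to set up the corollaries $N \in_4 N$ and, together with the parallel computation for $M$, the eventual conclusion $N \in_4 N \wedge N\pred \notin_4 N\pred$.
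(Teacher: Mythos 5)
Your proposal is correct and follows essentially the same route as the paper's own proof: observe that the $0\rep$ pair is always in the sprig because $\Bumpeq_0$ is trivial, that the $\mu\rep$ pair is in the sprig exactly when $x = M$ because $\Bumpeq_\mu$ is equality, and conclude that the sprig is odd iff $x \neq M$. Your version merely spells out the two-member bookkeeping slightly more explicitly than the paper does.
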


\begin{proof}
	By definition, \( x \in _4 N \iff  odd(sprig(N, x)) \).
There is only one \(0\rep\), so \(0\rep(x)\) is always in \{0\rep(0)\}.
So the sprig has an odd number of members unless \(\mu \rep(x) \in  \{\mu \rep(M)\}\) is also true, i.e., iff \(x = M\).  So \(x \in _4 N \iff  x \neq  M\), as required.\end{proof}

\begin{corollary} \(N \in _4 N\) \end{corollary}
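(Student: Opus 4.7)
The plan is to apply the preceding lemma directly. That lemma established, for every $x$, the equivalence $x \in_4 N \iff x \neq M$. Instantiating this at $x = N$ reduces the corollary to the single side condition $N \neq M$, which holds because $M$ and $N$ were stipulated to be two distinct urelements at the very start of the $\in_4$ construction. So the entire proof is one substitution plus one appeal to the choice made when defining $\Upsilon'''$.

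The only point that warrants a sanity check — and this is where the ``main obstacle'' lives, though it is very mild — is confirming that the swap operation producing $\Upsilon'''$ from $\Upsilon''$ has not silently identified $M$ and $N$ via the tagging. Since $\Upsilon''$ is bijective on Indexes and $\Upsilon'''$ is obtained from it by interchanging pairs of values, $\Upsilon'''$ remains a bijection, so distinct Indexes are mapped to distinct urelements. The Indexes $n$ and $m$ displayed just before the lemma genuinely differ in their final coordinate (because $\Bumpeq_\mu$ is equality, so $\mu\rep(M) \neq \mu\rep(N)$ follows from $M \neq N$), making $N = *n$ and $M = *m$ distinct in a way compatible with the original stipulation. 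Once this bookkeeping is acknowledged, the corollary is immediate and requires no further argument.
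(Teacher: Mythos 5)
Your proof is correct and matches the paper's (implicit) argument: the corollary is left as an immediate consequence of the preceding lemma, obtained by instantiating $x \in_4 N \iff x \neq M$ at $x = N$ and invoking the stipulated distinctness of the urelements $M$ and $N$. Your extra bookkeeping about $\Upsilon'''$ remaining a bijection is a harmless (and reasonable) sanity check, but nothing beyond the one-line substitution is needed.
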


\begin{lemma}  \(M\) is, in the sense of \(\in _4\), \(U \setminus  \{M, N\}\) \end{lemma}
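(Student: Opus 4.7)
The plan is to mirror the preceding lemma's proof essentially verbatim, with the single change that the $\mu$-th component of $M$'s tag now contains two elements rather than one. First I would unfold the definition: $x \in_4 M \iff odd(sprig(M, x))$, reducing the claim to a parity computation for each $x$.

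Then I would analyze the sprig component by component. As before, there is only one $0\rep$, so $0\rep(x) \in \{0\rep(0)\}$ always holds and contributes one pair to the sprig regardless of $x$. The remaining nontrivial component, at position $\mu$, is $\{\mu\rep(M), \mu\rep(N)\}$; since $\mu\rep$ is (essentially) the identity in the simplification adopted here, $\mu\rep(x)$ lies in this set iff $x = M$ or $x = N$. Hence $sprig(M, x)$ has exactly one member when $x \notin \{M, N\}$, and exactly two members when $x \in \{M, N\}$.

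Unfolding the oddness condition, $x \in_4 M$ iff the sprig size is $1$, which, by the preceding case analysis, holds iff $x \neq M$ and $x \neq N$. This is exactly the assertion that $M$ is $U \setminus \{M, N\}$ in the sense of $\in_4$.

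I do not anticipate any real obstacle, since this is only a minor variation of the proof of the $N$ lemma directly above. The only subtlety worth explicit verification is the parity claim itself: only the zeroth and $\mu$-th coordinates can contribute to the sprig (the elided ordinals strictly between $0$ and $\mu$ are ignored in this construction), and each contributes at most one pair, so the possible sprig sizes are $0$, $1$, and $2$. Of these, only $1$ is odd, so the condition \textbf{odd sprig} collapses to \emph{sprig of size exactly one}, which is what makes the reduction to $x \notin \{M, N\}$ go through.
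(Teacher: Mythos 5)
Your proof is correct and follows essentially the same route as the paper's: unfold $x \in_4 M$ as an odd-sprig condition, note the zeroth component always contributes one pair, and observe that the $\mu$-th component $\{\mu\rep(M), \mu\rep(N)\}$ adds a second pair exactly when $x = M$ or $x = N$, killing the parity. The paper compresses this to a one-line ``Similarly'' appeal to the preceding lemma, while you spell out the same parity bookkeeping explicitly.
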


\begin{proof}
	Similarly, \(x \in _4 M \iff  odd(sprig(M, x))\),
which is true unless \(\mu \rep(x) \in  \{\mu \rep(M), \mu \rep(N)\}\), iff
\(x = M \lor  x = N\).

So \(x \in _4 M \iff  x \neq  M \wedge  x \neq  N\), as required.\end{proof}

\begin{corollary} \(M \notin _4 M\) \end{corollary}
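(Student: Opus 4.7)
The plan is to invoke the immediately preceding lemma and perform a single substitution. That lemma establishes the extensional characterization $x \in_4 M \iff x \neq M \wedge x \neq N$, which fully determines membership in $M$ with respect to $\in_4$. Since the corollary concerns a specific instance of this equivalence, no further unpacking of the sprig/Index machinery is needed.

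Concretely, I would set $x := M$ in the equivalence from the lemma, obtaining $M \in_4 M \iff M \neq M \wedge M \neq N$. The first conjunct on the right, $M \neq M$, fails by the reflexivity of equality (independently of any facts about $N$, and in particular without needing to use the hypothesis that $M$ and $N$ were chosen to be distinct urelements). Hence the right-hand side of the biconditional is false, so $M \notin_4 M$.

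I do not anticipate any genuine obstacle here: once the lemma is in hand, the corollary is a one-line instantiation. The only thing worth flagging, for the reader's orientation, is that the argument does \emph{not} go through the route of Lemma~\ref{1} (i.e., showing $M$ is a lower and then concluding non-self-membership). The extensional description of $M$ suffices by itself, and Lemma~\ref{1} is more naturally brought in afterwards, in the sibling corollary that identifies $M$ with $N\pred$ and in the final corollary exhibiting $N \in_4 N \wedge N\pred \notin_4 N\pred$.
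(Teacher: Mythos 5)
Your proposal is correct and matches the paper's (implicit) argument: the corollary is stated without proof precisely because it is the instantiation $x := M$ in the preceding lemma's equivalence $x \in_4 M \iff x \neq M \wedge x \neq N$, whose right-hand side fails at $x = M$ since $M = M$. Your observation that the route via Lemma \ref{1} is not needed here is also consistent with how the paper organizes the appendix.
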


\begin{corollary} \(M\) is, in the sense of \(\in _4\), \(N\pred \) \end{corollary}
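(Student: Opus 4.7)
The plan is to combine the two immediately preceding lemmas, which characterize $N$ and $M$ as explicit complements in the sense of $\in_4$, with the Axiom of Predecessor applied inside the interpretation. Since the Axiom of Predecessor already asserts uniqueness of $x\pred$, the corollary reduces to checking that $M$ satisfies the defining property of $N\pred$; no separate appeal to extensionality for $\in_4$ is required, and no further bookkeeping about urelements, tags, or sprigs is needed at this step.

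Concretely, by the Axiom of Predecessor (in the sense of $\in_4$), $N\pred$ is the unique $y$ such that for every $z$, $z \in_4 y \iff z \in_4 N \wedge z \neq N$. Substituting the preceding lemma $z \in_4 N \iff z \neq M$ rewrites this condition as $z \neq M \wedge z \neq N$. The other preceding lemma states exactly $z \in_4 M \iff z \neq M \wedge z \neq N$. So $M$ satisfies the defining property of $N\pred$, and by the uniqueness clause of the Axiom of Predecessor, $M = N\pred$ in the sense of $\in_4$.

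The only potential obstacle, and it is really located upstream rather than in this corollary, is knowing that the interpretation $\in_4$ actually validates the Axiom of Predecessor, including its uniqueness clause. That is inherited from the corresponding argument in \cite{Sheridan 2016} for $\in_3$, which the paper claims goes through essentially verbatim after the local modification from $\Upsilon''$ to $\Upsilon'''$ (the swap affects only which urelement represents which tag, not the sprig-based definition of membership, so the interpreted axioms survive). Granting that, the corollary is a one-line substitution, and combined with the corollaries $N \in_4 N$ and $M \notin_4 M$ it yields the promised counterexample: $N \in_4 N$ while $N\pred \notin_4 N\pred$.
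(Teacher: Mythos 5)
Your proof is correct and is essentially the argument the paper intends: the corollary follows immediately from the two preceding lemmas by computing $z \in_4 N \wedge z \neq N \iff z \neq M \wedge z \neq N \iff z \in_4 M$ and invoking the uniqueness clause of the (interpreted) Axiom of Predecessor. The paper leaves this as an unproved corollary, and your upstream caveat about $\in_4$ validating the Predecessor axiom is a fair observation but does not change the substance.
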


\begin{corollary} In the sense of \(\in _4\), \(N \in _4 N \wedge  N\pred  \notin _4 N\pred \) \end{corollary}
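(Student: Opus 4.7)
The statement is a conjunction, and both conjuncts have already been established up to a substitution, so the plan is simply to cite the preceding two pairs of results and combine them.

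First, the left conjunct $N \in_4 N$ is exactly the earlier corollary derived from the lemma that $N$, in the sense of $\in_4$, equals $U \setminus \{M\}$: since $N \neq M$ (the two urelements were chosen distinct), $N$ lies in $U \setminus \{M\}$, hence $N \in_4 N$.

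Second, for the right conjunct $N\pred \notin_4 N\pred$, I would invoke the two corollaries immediately preceding the statement. One of them asserts $M \notin_4 M$ (consequence of the lemma identifying $M$ with $U \setminus \{M,N\}$ in the sense of $\in_4$, together with the fact that $M \in \{M,N\}$). The other asserts that $M$ is, in the sense of $\in_4$, equal to $N\pred$. Since the Axiom of Predecessor gives uniqueness of $N\pred$, this identification means $M$ and $N\pred$ denote the very same object in the interpreted theory, so substituting $N\pred$ for $M$ in $M \notin_4 M$ yields $N\pred \notin_4 N\pred$. Conjoining the two conclusions gives the stated result.

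There is essentially no obstacle here beyond that substitution: the real work was done in the two preceding lemmata (computing which objects $M$ and $N$ represent under $\in_4$ via the sprig-parity criterion) and in the corollary that identifies $M$ with $N\pred$. Once those are in hand, the final corollary is a two-line combination, so I would keep the write-up to a single sentence citing the two corollaries and noting the substitution $M = N\pred$.
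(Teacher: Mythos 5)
Your proposal is correct and matches the paper's (implicit) argument exactly: the paper states this corollary with no separate proof, relying precisely on the chain you cite — the corollary \(N \in_4 N\), the corollary \(M \notin_4 M\), and the corollary identifying \(M\) with \(N\pred\) in the sense of \(\in_4\), combined by substitution. Nothing further is needed.
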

\textit{Q.E.D.}

\section*{Acknowledgments}
	 \censor{This paper began in 2005 as a reconstruction of my essay on the \MyIndex{Russell Paradox} for \MyIndex{Angus Macintyre}’s Philosophy of Mathematics course in 1980, which provided the philosophical motivation for my work on Church’s Set Theory with a Universal Set \cite{Church 1974a}.  The reconstruction was heavily influenced by my unpublished article on singletons in Skala’s set theory \cite{Skala 1974}, which had been implicitly superseded by K\"uhnrich and Schultz’s earlier article \cite{Kuhnrich 1980}; Skala’s terminology is the origin of my terms “upper” and “lower.”  Various of Thomas Forster’s articles and corrections were also an influence, though the reconstruction was nearly complete before I read the most relevant of these, the November 2005 version of “The Iterative Conception of Set” \cite{Forster 2008}.}
	 
	 \censor{I am grateful to two anonymous referees for criticisms, suggestions, and demands for additional philosophical arguments, to my wife Olga Miroshnychenko for assistance while writing and rewriting, to Roger W. Janeway for discussions of the orginal version of this article and corrections and a metaphor for the final version, and to Murdoch J. Gabbay for suggestions which led to substantial improvements and additions.}

\fi 

\end{document}